\theoremstyle{plain}
\newtheorem{theorem}{Theorem}
\newtheorem{lemma}{Lemma}
\newtheorem*{theo*}{Theorem}
\newtheorem{proposition}{Proposition}
\newtheorem{corollary}{Corollary}
\theoremstyle{definition}
\newtheorem{definition}{Definition}
\newtheorem*{definition*}{Definition}
\newtheorem{example}{Example}
\newtheorem{remark}{Remark}
\def\AA{{\mathbb A}}
\def\PP{{\mathbb P}}
\def\QQ{{\mathbb Q}}
\def\KK{{\mathbb K}}
\def\ZZ{{\mathbb Z}}
\def\XX{{\mathbb X}}
\def\GG{{\mathbb G}}
\newcommand{\cG}{{\ensuremath{\mathcal{G}}}}
\newcommand{\cH}{{\ensuremath{\mathcal{H}}}}
\def\Of{{\mathcal{O}}}
\def\SAut{\mathop{\rm SAut}}
\def\div{\mathop{\rm div}}
\def\reg{\mathop{\rm reg}}
\def\Ker{\mathop{\rm Ker}}
\def\Spec{\mathop{\rm Spec}}
\def\Aut{\mathop{\rm Aut}}
\def\WDiv{\mathop{\rm WDiv}}
\def\Cl{\mathop{\rm Cl}}
\DeclareMathOperator{\Pic}{Pic}
\DeclareMathOperator{\tail}{tail}
\newcommand{\D}{{\mathcal{D}}}
\newcommand{\fan}{\mathcal{S}}
\DeclareMathOperator{\cone}{cone}
\begin{document}
\sloppy
\title[Infinite transitivity on universal torsors]
{Infinite transitivity on universal torsors}
\author{Ivan Arzhantsev}
\thanks{The first and the second authors were partially supported by the Ministry of Education
and Science of Russian Federation, project 8214, by RFBR grants 12-01-00704, 12-01-31342 a,
and by the Simons-IUM Grant.
The first author was supported by the Dynasty Foundation.
The third author was supported by a fellowship of the Alexander von Humboldt Foundation.
}
\address{Department of Higher Algebra, Faculty of Mechanics and Mathematics,
Lomonosov Moscow State University, Leninskie Gory 1, GSP-1, Moscow, 119991,
Russia}
\address{
National Research University Higher School of Economics,
School of Applied Mathematics and Information Science,
Bolshoi Trekhsvyatitelskiy~3, Moscow 109028, Russia}

\email{arjantse@mccme.ru}

\author{Alexander Perepechko}
\address{Department of Higher Algebra, Faculty of Mechanics and Mathematics,
Lomonosov Moscow State University, Leninskie Gory 1, GSP-1, Moscow, 119991,
Russia}
\address{Universit\'e Grenoble I, Institut Fourier, UMR 5582 CNRS-UJF, BP 74, 34802 St Martin d'H\`eres c\'edex, France}  \email{perepeal@gmail.com}
\author{Hendrik S\"u\ss}
\address{School of Mathematics,
The University of Edinburgh,
James Clerk Maxwell Building,
The King's Buildings,
Mayfield Road,
Edinburgh EH9 3JZ } \email{suess@sdf-eu.org}
\date{\today}
\begin{abstract}
Let $X$ be an algebraic variety covered by open charts isomorphic to the affine space
and $q: \widehat{X} \to X$ be the universal torsor over $X$. We prove that
the special automorphism group of the quasi-affine variety $\widehat{X}$ acts on $\widehat{X}$
infinitely transitively. Also we find wide classes of varieties $X$ admitting such a covering.
\end{abstract}
\subjclass[2010]{Primary 14M25, 14R20; \ Secondary 14J50, 14L30}
\keywords{Algebraic variety, universal torsor, Cox ring, automorphism, transitivity}
\maketitle
\section*{Introduction}

The aim of this paper is to investigate the infinite transitivity property for
the special automorphism group of universal torsors over smooth algebraic varieties.

Recall that an action of a group $G$ on a set $Y$
is $m$-transitive if it is transitive on $m$-tuples of pairwise distinct points in $Y$,
and is infinitely transitive if it is $m$-transitive for all positive integers $m$.
If $Y$ is an algebraic variety, let us denote by $\SAut(Y)$ the group of special automorphisms
of $Y$, i.e. the subgroup of the automorphism group $\Aut(Y)$ generated by all one-parameter
unipotent subgroups. Assume that $Y$ is an affine irreducible variety of dimension~$\ge 2$.
It is proved in~\cite{AFKKZ} that transitivity of the action of
$\SAut(Y)$ on the smooth locus $Y_{\reg}$ implies infinite transitivity of this action.
Moreover, these conditions are equivalent to flexibility of~$Y$, which is a local
property formulated in terms of velocity vectors to orbits of one-parameter unipotent subgroups.

The study of flexible varieties is important for several reasons. The infinite transitivity
property shows that the group of (special) automorphisms is "large" in this case.
It may indicate that these varieties are the most interesting ones from the geometric point
of view. In the recent paper by Bogomolov, Karzhemanov and Kuyumzhiyan~\cite{BKK} the connection
between flexibility and unirationality is investigated. It is proved in~\cite{AFKKZ}
that every flexible variety is unirational. As a result in the opposite direction, it is conjectured in~\cite{BKK} that any unirational variety is stably birational to some
infinitely transitive variety. This conjecture is confirmed in~\cite{BKK} for several
important cases.

It is easy to show that the affine space $\AA^n$ is flexible. Kaliman and Zaidenberg~\cite{KZ} proved that any hypersurface in $\AA^{n+2}$ given by equation $uv=f(x_1,\ldots,x_n)$ with a
non-constant polynomial $f$ has the infinite transitivity property. More generally, if $X$ is a flexible affine variety, then the suspension over $X$, i.e. a hypersurface in $\AA^2\times X$
given by equation $uv=f(x)$, is flexible as well~\cite{AKZ}. Also it is shown in~\cite{AKZ} that
any non-degenerate affine toric variety is flexible. By~\cite{AFKKZ}, flexibility holds for
affine homogeneous spaces of semisimple algebraic groups and total spaces of vector bundles over
flexible varieties. In~\cite{Pe}, flexibility is established for affine cones
over some del Pezzo surfaces. Some other examples of flexible varieties can be found
in~\cite{AFKKZ}, \cite{AFKKZ-2}.

In general, one would like to have geometric constructions of flexible varieties.
In this paper we prove that the universal torsor over a variety admitting a
covering by affine spaces leads to a flexible variety and thus obtain
a wide class of quasi-affine varieties with infinite transitivity property.

Let $X$ be a smooth algebraic variety. Assume that the divisor class group $\Cl(X)$ is
a lattice of rank $r$. The universal torsor $q: \widehat{X} \to X$ is a locally trivial
$H$-principal bundle with certain characteristic properties, where $H$ is an algebraic
torus of dimension $r$, see \cite[Section~1]{Sk};
here $\widehat{X}$ is a smooth quasi-affine algebraic variety.

Universal torsors were introduced by Colliot-Th\'el\`ene and Sansuc
in the framework of arithmetic geometry to investigate rational points on
algebraic varieties, see~\cite{CTS1}, \cite{CTS2}, \cite{Sk}. In the last years
they were used to obtain positive results on Manin's Conjecture. Another source of interest
is Cox's paper~\cite{Cox}, where an explicit description of the universal torsor
over a toric variety is given. This approach had an essential impact on toric geometry.
For generalizations and relations to Cox rings, see \cite{HK}, \cite{BH1}, \cite{BH2}, \cite{Ha}, \cite{ADHL}.

%

The paper is organized as follows. In Section~\ref{sec1} we recall basic definitions and facts
on Cox rings and universal torsors.
The group of special automorphisms $\SAut(Y)$ of an algebraic variety $Y$ is considered
in Section~\ref{sec2}. It is shown in \cite{AFKKZ} that if $Y$ is affine of
dimension at least 2 and the group $\SAut(Y)$ acts transitively on an open subset in $Y$,
then this action is infinitely transitive. In Theorem~\ref{mainq} we extend this result
to the case when $Y$ is quasi-affine.

It is observed in~\cite{KPZ} that open cylindric subsets on a projective variety $X$ give rise
to one-parameter unipotent subgroups in the automorphism group of an affine cone over $X$.
This idea is developed further in \cite{KPZ2} and \cite{Pe}.
In Section~\ref{sec3} we show that if
$X$ is a smooth algebraic variety with a free finitely generated
divisor class group $\Cl(X)$, which is transversally covered by cylinders,
then the group $\SAut(\widehat{X})$ acts on the universal torsor $\widehat{X}$ transitively.

As a particular case, in
Section~\ref{sec4} we study $A$-covered varieties, i.e. varieties covered by open subsets
isomorphic to the affine space. Clearly, any $A$-covered variety is smooth and rational.
We list wide classes of $A$-covered varieties including smooth complete
toric or, more generally, spherical varieties, smooth rational projective surfaces, and
some Fano threefolds. It~is shown that the condition to be $A$-covered is preserved under
passing to vector bundles and their projectivizations as well as to the blow up in
a linear subvariety.
In the appendix to this paper we prove that every smooth complete rational variety with a torus action of complexity one is $A$-covered. This part uses the technique of polyhedral divisors from
\cite{ah06}, \cite{ahs}.

In Section~\ref{sec5} we summarize our results on universal torsors and infinite transitivity.
Theorem~\ref{tmain} claims that if $X$ is an $A$-covered algebraic variety of dimension
at least~2, then $\SAut(\widehat{X})$ acts on the universal torsor $\widehat{X}$
infinitely transitively. If the Cox ring $\mathcal{R}(X)$
is finitely generated, then the total coordinate space
$\overline{X}:=\Spec\,\mathcal{R}(X)$ is a factorial affine variety,
the group $\SAut(\overline{X})$ acts on $\overline{X}$ with an open orbit $O$,
and the action of $\SAut(\overline{X})$ on $O$ is infinitely transitive, see Theorem~\ref{ttmain}.
In particular, the Makar-Limanov invariant of $\overline{X}$ is trivial, see Corollary~\ref{corML}.

We work over an algebraically closed field $\KK$ of characteristic zero.

\section{Preliminaries on Cox rings and universal torsors}
\label{sec1}

Let $X$ be a normal algebraic variety with free finitely generated divisor class group
$\Cl(X)$. Denote by $\WDiv(X)$ the group of Weil divisors on $X$ and fix a subgroup
$K\subseteq \WDiv(X)$ such that the canonical map $c\colon K\to\Cl(X)$ sending $D\in K$
to its class $[D]\in\Cl(X)$ is an isomorphism. We define the {\it Cox sheaf} associated
to $K$ to be
$$
\mathcal{R} := \bigoplus_{[D]\in\Cl(X)} \mathcal{R}_{[D]},
\qquad \mathcal{R}_{[D]} := \Of_X(D),
$$
where $D\in K$ represents $[D]\in\Cl(X)$ and the multiplication in $\mathcal{R}$
is given by multiplying homogeneous sections in the field of rational functions $\KK(X)$.
The sheaf $\mathcal{R}$ is a quasicoherent sheaf of normal integral $K$-graded
$\Of_X$-algebras and, up to isomorphy, it does not depend on the choice of the subgroup
$K\subseteq\WDiv(X)$, see \cite[Construction~I.4.1.1]{ADHL}. The {\it Cox ring} of $X$
is the algebra of global sections
$$
\mathcal{R}(X) := \bigoplus_{[D]\in\Cl(X)} \mathcal{R}_{[D]}(X),
\qquad \mathcal{R}_{[D]}(X) := \Gamma(X,\Of_X(D)).
$$

Let us assume that $X$ is a smooth variety with only constant invertible functions.
Then the sheaf $\mathcal{R}$ is locally of finite type,
and the relative spectrum $\Spec_X \mathcal{R}$ is a quasi-affine variety $\widehat{X}$,
see \cite[Corollary~I.3.4.6]{ADHL}. We have $\Gamma(\widehat{X},\Of)\cong\mathcal{R}(X)$, and
the ring $\mathcal{R}(X)$ is a unique factorization domain with only constant invertible
elements, see \cite[Proposition~I.4.1.5]{ADHL}.
Since the sheaf $\mathcal{R}$ is $K$-graded,
the variety $\widehat{X}$ carries a natural action of the torus $H:=\Spec\,\KK[K]$.
The projection $q\colon\widehat{X}\to X$ is called the {\it universal torsor}
over the variety $X$. By \cite[Remark~I.3.2.7]{ADHL}, the morphism
$q\colon\widehat{X}\to X$ is a locally trivial $H$-principal bundle.
In particular, the torus $H$ acts on $\widehat{X}$ freely.

\begin{lemma} \label{leman}
Let $X$ be a normal variety. Assume that there is an open subset
$U$ on $X$ which is isomorphic to the affine space $\AA^n$. Then
any invertible function on $X$ is constant and
the group $\Cl(X)$ is freely generated by classes
$[D_1],\ldots,[D_k]$ of the prime divisors such that
$$
X\setminus U = D_1 \cup \ldots \cup D_k.
$$
\end{lemma}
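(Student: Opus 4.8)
The plan is to reduce everything to two elementary properties of the affine space: its coordinate ring $\KK[x_1,\dots,x_n]$ has only constant units, and $\AA^n$ is factorial, so that $\Cl(\AA^n)=0$. Throughout one uses that a variety is irreducible and reduced, hence $U$ is dense in $X$, $\KK(X)=\KK(U)$, and restriction of regular functions from $X$ to $U$ is injective; normality of $X$ ensures that prime divisors, orders of rational functions along them, and the divisor class group behave in the usual way.

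First I would settle the claim on invertible functions: if $f\in\Gamma(X,\Of_X)^{*}$, then $f|_U$ is an invertible regular function on $U\cong\AA^n$, hence a nonzero constant $c$; since $f$ agrees with $c$ on the dense subset $U$ and $X$ is reduced, $f=c$.

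For the class group, let $D_1,\dots,D_k$ be the codimension-one irreducible components of the closed set $X\setminus U$. I would invoke the standard excision exact sequence for divisor class groups (see \cite{ADHL})
$$
\bigoplus_{i=1}^k\ZZ\cdot D_i \longrightarrow \Cl(X) \longrightarrow \Cl(U) \longrightarrow 0 ,
$$
and combine it with $\Cl(U)=\Cl(\AA^n)=0$ to conclude that $[D_1],\dots,[D_k]$ generate $\Cl(X)$. For freeness I would show that the left-hand map is injective: if $\sum_i a_i[D_i]=0$ in $\Cl(X)$, write $\sum_i a_iD_i=\div(f)$ with $f\in\KK(X)^{*}$ and restrict this equality of divisors to $U$; as $D_i\cap U=\emptyset$ for all $i$, this yields $\div_U(f|_U)=0$ on $U\cong\AA^n$, so $f|_U$ is a unit of $\KK[x_1,\dots,x_n]$, hence a nonzero constant, whence $f\in\KK^{*}$ and $\div(f)=0$. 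Then $\sum_i a_iD_i=0$ in $\WDiv(X)$, and since the $D_i$ are pairwise distinct prime divisors, $a_i=0$ for all $i$. Thus $\Cl(X)=\bigoplus_{i=1}^k\ZZ[D_i]$ is free on the classes $[D_i]$.

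The point requiring care is the displayed identity $X\setminus U=D_1\cup\dots\cup D_k$ itself, i.e.\ that $X\setminus U$ has no irreducible component of codimension $\ge 2$. This is the classical fact that the complement of an affine open subset of a variety is of pure codimension one; it is the only non-formal ingredient, since the class-group computation above uses only the codimension-one components of $X\setminus U$, and the rest is the excision sequence together with the observation that a rational function whose divisor avoids a dense chart isomorphic to $\AA^n$ restricts to a unit there and is therefore constant.
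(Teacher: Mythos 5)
Your proof is correct and follows essentially the same route as the paper: units restrict to constants on $U\cong\AA^n$, generation of $\Cl(X)$ comes from $\Cl(U)=0$ (your excision sequence is just the paper's observation that, $U$ being factorial, every Weil divisor is linearly equivalent to one supported off $U$), and freeness follows by restricting $\div(f)$ to $U$ and using that invertible regular functions on $\AA^n$ are constants. The only point you treat more explicitly than the paper is the displayed equality $X\setminus U=D_1\cup\ldots\cup D_k$: the paper silently takes the $D_i$ to be the codimension-one components of the complement, while you correctly flag that purity in codimension one of the complement of an affine open chart (a classical fact, which does use the normality hypothesis on $X$) is the one non-formal ingredient needed for that identity.
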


\begin{proof}
The restriction of an invertible function to $U$ is constant, so the function
is constant.
Since $U$ is factorial, any Weil divisor on $X$ is linearly equivalent
to a divisor whose support does not intersect $U$. This shows that
the group $\Cl(X)$ is generated by $[D_1],\ldots,[D_k]$.

Assume that
$a_1D_1+\ldots+a_kD_k=\div(f)$ for some $f\in\KK(X)$. Then $f$ is a regular
invertible function on $U$ and thus $f$ is a constant. This shows that
the classes $[D_1],\ldots,[D_k]$ generate the group $\Cl(X)$ freely.
\end{proof}

The Cox ring $\mathcal{R}(X)$ and the relative spectrum $q\colon \widehat{X}\to X$
can be defined and studied under weaker assumptions on the variety $X$,
see \cite[Chapter~I]{ADHL}. But in this paper we are interested in smooth varieties
with free finitely generated divisor class group.

Assume that the Cox ring $\mathcal{R}(X)$ is finitely generated. Then we may consider
the {\it total coordinate space} $\overline{X}:=\Spec\,\mathcal{R}(X)$.
This is a factorial affine $H$-variety. By~\cite[Construction~I.6.3.1]{ADHL},
there is a natural open $H$-equivariant embedding $\widehat{X}\hookrightarrow\overline{X}$
such that the complement $\overline{X}\setminus\widehat{X}$ is of codimension
at least two.


\section{Special automorphisms and infinite transitivity}
\label{sec2}

An action of a group $G$ on a set $A$ is said to be {\it $m$-transitive} if
for every two tuples of pairwise distinct points $(a_1,\ldots,a_m)$
and $(a_1',\ldots,a_m')$ in $A$ there exists $g\in G$ such that
$g\cdot a_i=a_i'$ for $i=1,\ldots,m$. An action which is $m$-transitive
for all $m\in\ZZ_{>0}$ is called {\it infinitely transitive}.

Let $Y$ be an algebraic variety.
Consider a regular action $\GG_a\times Y \to Y$ of the
additive group $\GG_a=(\KK,+)$ of the ground field on~$Y$. The image $L$
of $\GG_a$ in the automorphism group $\Aut(Y)$ is a one-parameter
unipotent subgroup. We let $\SAut(Y)$ denote the subgroup of~$\Aut(Y)$
generated by all its one-parameter unipotent subgroups. Automorphisms
from the group $\SAut(Y)$ are called {\it special}. In general,
$\SAut(Y)$ is a normal subgroup of~$\Aut(Y)$.

Denote by $Y_{\reg}$ the smooth locus of a variety $Y$.
We say that a point
$y\in Y_{\reg}$ is {\it flexible} if the tangent space $T_yY$ is
spanned by the tangent
vectors to the orbits $L\cdot y$ over all one-parameter unipotent
subgroups $L$ in $\Aut(Y)$.
The variety $Y$ is {\it flexible} if every point $y\in Y_{\reg}$ is.
Clearly, $Y$ is flexible if one point of $Y_{\reg}$ is and the group $\Aut(Y)$
acts transitively on $Y_{\reg}$.

The following result is proved in~\cite[Theorem~0.1]{AFKKZ}.

\begin{theorem}\label{main}
Let $Y$ be an irreducible affine variety of dimension $\ge 2$.
Then the following
conditions are equivalent.
\begin{enumerate}
\item
The group $\SAut(Y)$ acts transitively on $Y_{\reg}$.
\item
The group $\SAut(Y)$ acts infinitely transitively on $Y_{\reg}$.
\item
The variety $Y$ is flexible.
\end{enumerate}
\end{theorem}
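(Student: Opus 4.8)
Since $(2)\Rightarrow(1)$ is trivial, it suffices to prove $(1)\Rightarrow(3)$ and $(3)\Rightarrow(2)$. For $(1)\Rightarrow(3)$ fix $y\in Y_{\reg}$; by transitivity its $\SAut(Y)$-orbit is all of $Y_{\reg}$. Writing $\SAut(Y)$ as the union, over finite words $w=(H_1,\dots,H_N)$ in one-parameter unipotent subgroups, of the morphisms $\phi_w\colon\AA^N\to Y$, $\phi_w(t_1,\dots,t_N)=h_1(t_1)\cdots h_N(t_N)\cdot y$ (here $h_i$ is a coordinate on $H_i$), we exhibit $Y_{\reg}$ as a countable union of the irreducible constructible sets $\mathrm{im}(\phi_w)$. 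As $Y_{\reg}$ is irreducible of dimension $n=\dim Y$, one $\phi_w$ is dominant (by Baire category when $\KK=\CC$; the general case needs only a minor variant). At a general $t^0$ the differential $d(\phi_w)_{t^0}$ is then surjective, and one checks directly that its $j$-th column is the velocity vector at $\phi_w(t^0)$ of the one-parameter unipotent subgroup $g H_j g^{-1}$ with $g=h_1(t^0_1)\cdots h_{j-1}(t^0_{j-1})$. Hence $\phi_w(t^0)$ is flexible; since any automorphism conjugates one-parameter unipotent subgroups to such subgroups and transports velocity vectors by its differential, transitivity of $\SAut(Y)$ spreads flexibility over all of $Y_{\reg}$.

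For $(3)\Rightarrow(2)$ note first that $Y$ flexible implies $\SAut(Y)$ is transitive on $Y_{\reg}$: choosing one-parameter unipotent subgroups $H_1,\dots,H_n$ with velocity vectors at a point $y$ forming a basis of $T_yY$, the morphism $\psi(t_1,\dots,t_n)=h_1(t_1)\cdots h_n(t_n)\cdot y$ has invertible differential at the origin, so its image contains a neighbourhood of $y$; thus the $\SAut(Y)$-orbits in $Y_{\reg}$ are open, and $Y_{\reg}$ being irreducible there is only one. We then prove $m$-transitivity by induction on $m$, the case $m=1$ being transitivity. Assuming $(m-1)$-transitivity, given distinct tuples $(a_1,\dots,a_m)$ and $(b_1,\dots,b_m)$ in $Y_{\reg}$, we first move $(a_1,\dots,a_{m-1})$ onto $(b_1,\dots,b_{m-1})$; the image $c$ of $a_m$ is then distinct from every $b_i$, so it remains to prove: for a finite set $Z\subset Y_{\reg}$ and points $c,d\in Y_{\reg}\setminus Z$ there is $g\in\SAut(Y)$ fixing $Z$ pointwise with $g\cdot c=d$.

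The tool is the \emph{replica} construction: for a locally nilpotent derivation $\partial$ of $\KK[Y]$, generating a one-parameter unipotent subgroup $H$, and any $f\in\ker\partial$, the derivation $f\partial$ is again locally nilpotent, and the one-parameter unipotent subgroup $H_f$ it generates fixes the zero set $\{f=0\}$ pointwise while at a point $p$ with $f(p)\ne0$ its velocity vector is $f(p)$ times that of $H$. The plan is: (i) show that at every $p\in Y_{\reg}\setminus Z$ the velocity vectors of those one-parameter unipotent subgroups that fix $Z$ pointwise already span $T_pY$. By the argument of the second paragraph this makes the orbits of the subgroup $G_Z\le\SAut(Y)$ generated by such subgroups open in $Y_{\reg}\setminus Z$, which is irreducible; so $G_Z$ acts transitively there, proving the reduced claim. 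For (i) it suffices, for enough $\partial$ with nonzero velocity at $p$, to find $f\in\ker\partial$ vanishing on $Z$ with $f(p)\ne0$: then $H_f$ fixes $Z$ and keeps a velocity parallel to that of $H$ at $p$. This is precisely where $\dim Y\ge2$ enters, since it makes $\ker\partial$ (of transcendence degree $n-1$) large enough for such an $f$ to have a chance of existing.

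The main obstacle is the existence of these separating invariants: for a poorly chosen $\partial$ the point $p$ may lie in the smallest $H$-stable closed subset containing $Z$, and then every $f\in\ker\partial$ vanishing on $Z$ also vanishes at $p$. Overcoming this is where flexibility must be used in full: the family of \emph{all} one-parameter unipotent subgroups of $Y$ is stable under conjugation and under passage to replicas, and flexibility makes it rich enough that, after first using automorphisms already constructed to bring $Z$ into general position relative to $p$ --- and, if one step does not suffice, chaining several replica automorphisms, the inequality $\dim Y\ge2$ leaving room for the intermediate $\GG_a$-orbit of the moving point to avoid $Z$ --- one does obtain, for enough directions, one-parameter unipotent subgroups fixing $Z$ and moving $p$ in that direction. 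This establishes (i), hence the inductive step, hence infinite transitivity.
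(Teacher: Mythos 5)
Note first that the paper does not actually prove Theorem~\ref{main}: it is quoted from \cite{AFKKZ}, and what the paper proves in detail is the quasi-affine analogue of the implication $1\Rightarrow 2$, namely Theorem~\ref{mainq}. Your overall architecture ($(2)\Rightarrow(1)$ trivial, $(1)\Rightarrow(3)$ via a dominant product-of-subgroups map, $(3)\Rightarrow(2)$ by induction reduced to moving one point while fixing a finite set, using replicas) is the standard one, but two steps are genuinely incomplete. In $(1)\Rightarrow(3)$ you cannot write $\SAut(Y)$ as a \emph{countable} union of images of word maps: already the replicas $f\partial$, $f\in\Ker\partial$, give an uncountable family of one-parameter unipotent subgroups, so the set of finite words is uncountable and the Baire-category step fails even over $\CC$. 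The correct tool is the finite-product description of orbits of algebraically generated groups (Proposition~\ref{1.2}, i.e.\ \cite[Proposition~1.5]{AFKKZ}): $\SAut(Y)\cdot y=(H_1\cdots H_s)\cdot y$ for a single finite sequence, so transitivity makes this one map dominant, and then your differential/conjugation argument goes through over any $\KK$.

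More seriously, the heart of $(3)\Rightarrow(2)$ --- your claim (i) that at \emph{every} $p\in Y_{\reg}\setminus Z$ the velocity vectors of one-parameter subgroups fixing $Z$ pointwise span $T_pY$ --- is asserted rather than proved; the sentence ``flexibility makes the family rich enough that \dots one does obtain\dots'' is exactly the content that requires an argument, and you yourself identify the obstruction (the point $p$ may lie in the invariant-theoretic closure of $Z$ for every relevant derivation, so no separating $f\in\Ker\partial$ exists). The known proofs do not establish (i) pointwise; instead they (a) invoke Rosenlicht's theorem to obtain, for each subgroup $H_k$ of a complete sequence, an open subset on which regular $H_k$-invariants separate orbits (compare Remark~\ref{exanew}); (b) show that any finite tuple can first be moved by the group into a ``regular'' position, i.e.\ into that open set and with pairwise distinct $H_k$-orbits --- a nontrivial induction on the number of bad triples $(i,j,k)$, and the place where $\dim Y\ge 2$ is genuinely used (compare Lemma~\ref{lemV}); and (c) prove that for a regular tuple the stabilizer of the first $m-1$ points has an orbit through the $m$-th point containing an open subset (compare Lemma~\ref{lemz} and Remark~\ref{fiberwise}), after which two such open orbits intersect and the induction closes. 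Your separating functions $f$ with $f|_Z=0$, $f(p)\ne 0$ are available only after step (b); without it the proof as written has a gap precisely at its crucial point, so the sketch reproduces the easy parts of the argument of \cite{AFKKZ} but leaves out the mechanism that makes the one-point-moving lemma work.
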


A more general version of implication 1~$\Rightarrow$~2 is given
in~\cite[Theorem~2.2]{AFKKZ}. In this section we obtain an analog
of this result for quasi-affine varieties, see Theorem~\ref{mainq} below.

Let $Y$ be an algebraic variety. A regular action $\GG_a\times Y\to Y$
defines a structure of a rational $\GG_a$-algebra on $\Gamma(Y,\Of)$. The differential
of this action is a locally nilpotent derivation $D$ on $\Gamma(Y,\Of)$.
Elements in $\Ker D$ are precisely the functions invariant under $\GG_a$.
The structure of a $\GG_a$-module on $\Gamma(Y,\Of)$ can be reconstructed
from $D$ via exponential map.

Assume that $Y$ is quasi-affine. Then regular functions separate points on $Y$.
In particular, any automorphism of $Y$ is uniquely defined by the induced
automorphism of the algebra $\Gamma(Y,\Of)$. Hence a regular $\GG_a$-action
on $Y$ can be reconstructed from the corresponding locally nilpotent derivation $D$.
At the same time, if $Y$ is not affine, then not every locally nilpotent derivation
on $\Gamma(Y,\Of)$ gives rise to a regular $\GG_a$-action on $Y$. For example, the derivation
$\frac{\partial}{\partial x_1}$ does not define a regular $\GG_a$-action on
$\AA^2\setminus\{(0,0)\}$, while $x_2\frac{\partial}{\partial x_1}$ does.

If $D$ is a locally nilpotent derivation assigned to a $\GG_a$-action on
a quasi-affine variety $Y$ and $f\in\Ker D$, then the derivation $fD$ is locally nilpotent and
it corresponds to a $\GG_a$-action on $Y$ with the same orbits on $Y\setminus\div(f)$,
which fixes all points on the divisor $\div(f)$. The one-parameter subgroup of $\SAut(Y)$
defined by $fD$ is called a {\em replica} of the subgroup given by $D$.

We say that a subgroup $G$ of $\Aut(Y)$ is {\em algebraically generated} if it is generated
as an abstract group by a family $\cG$ of connected algebraic\footnote{not necessarily affine.} subgroups of $\Aut(Y)$.

\begin{proposition} \cite[Proposition~1.5]{AFKKZ} \label{1.2}
There are (not necessarily distinct) subgroups
$H_1,\ldots, H_s\in \cG$ such that
\begin{equation} \label{1.2.a}
G.x= (H_1\cdot H_2\cdot\ldots\cdot H_s)\cdot x\quad \forall x\in X.
\end{equation}
\end{proposition}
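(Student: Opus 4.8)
The plan is to encode orbits by graphs in $X\times X$. We may assume $X$ irreducible. For a finite word $\omega=(H_{i_1},\dots,H_{i_s})$ in members of $\cG$ put $P_\omega:=H_{i_1}\cdots H_{i_s}\subseteq\Aut(X)$, $O_\omega(x):=P_\omega\cdot x$, and consider the morphism
$$
\mu_\omega\colon H_{i_1}\times\dots\times H_{i_s}\times X\ \longrightarrow\ X\times X,\qquad (h_1,\dots,h_s,x)\ \longmapsto\ (x,\ h_1\cdots h_s\cdot x).
$$
By Chevalley's theorem its image $\Omega_\omega$ is constructible, and $\overline{\Omega_\omega}$ is irreducible since each connected group $H_{i_j}$ and $X$ are irreducible. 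One records the elementary facts: $\Omega_\omega$ contains the diagonal $\Delta_X$; for the concatenation $\omega\omega'$ one has $\Omega_{\omega\omega'}\supseteq\Omega_\omega\cup\Omega_{\omega'}$ and, more precisely, $\Omega_{\omega\omega'}$ contains the composite of the relations $\Omega_\omega$ and $\Omega_{\omega'}$; reversing a word transposes the corresponding $\Omega$; and, since each $H_i$ is a subgroup (so $H_i^{-1}=H_i$ and $e\in H_i$) and $\cG$ generates $G$, the union $\bigcup_\omega\Omega_\omega$ is exactly the orbit relation $R:=\{(x,y)\mid y\in G\cdot x\}$. Thus it suffices to produce one word $\omega^\ast$ with $\Omega_{\omega^\ast}=R$.

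First I would single out a ``governing'' word. The irreducible closed sets $\overline{\Omega_\omega}$ of the Noetherian space $X\times X$ form a family directed by inclusion and of bounded dimension, so some $\overline{\Omega_{\omega_0}}$ has maximal dimension; directedness together with irreducibility then force $\overline{\Omega_{\omega_0}}\supseteq\overline{\Omega_\omega}$ for every $\omega$, whence $Z:=\overline{\Omega_{\omega_0}}=\overline R$ and, similarly, $\overline{\Omega_{\omega_0\omega}}=Z$ for all $\omega$. Since $\Omega_{\omega_0}$ is constructible and dense in the irreducible $Z$, the standard analysis of generic fibres of the first projection $Z\to X$ yields a dense open $X^\circ\subseteq X$ over which both $O_{\omega_0}(x)$ and $O_{\omega_0^{\mathrm{rev}}}(x)$ are dense constructible subsets of $\overline{G\cdot x}$; here I use that $\overline{G\cdot x}$ is irreducible for \emph{every} $x\in X$, which again follows from the maximal-dimension argument applied fibrewise.

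Next I would upgrade this generic statement to all points by two devices. The \emph{escape lemma}: the set $V:=\{x\mid G\cdot x\cap X^\circ\neq\varnothing\}=\bigcup_{g\in G}g^{-1}X^\circ$ is $G$-stable and open, and it is the directed union over all words $\omega$ of the open sets $\bigcup_{g\in P_\omega}g^{-1}X^\circ$; since every open subset of the Noetherian space $X$ is quasi-compact, one of these sets already equals $V$, so there is a word $\omega_2$ with $O_{\omega_2}(x)\cap X^\circ\neq\varnothing$ and $O_{\omega_2^{\mathrm{rev}}}(x)\cap X^\circ\neq\varnothing$ for all $x\in V$. The \emph{complement}: $B:=X\setminus V$ is a proper closed $G$-stable subvariety, so by Noetherian induction on $G$-stable closed subvarieties there is a word $\omega_1$ with $O_{\omega_1}(x)=G\cdot x$ for all $x\in B$. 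Now take $x\in V$ and $z\in G\cdot x$, and move $x$ to some $x'\in O_{\omega_2}(x)\cap X^\circ$ and $z$ to some $z'\in O_{\omega_2^{\mathrm{rev}}}(z)\cap X^\circ$. As $x',z'$ lie in the same $G$-orbit, the two dense constructible subsets $O_{\omega_0}(x')$ and $O_{\omega_0^{\mathrm{rev}}}(z')$ of the irreducible variety $\overline{G\cdot x'}=\overline{G\cdot z'}$ intersect in a point $w$ (each contains a dense open, and two nonempty opens in an irreducible space meet), and chaining the steps $x\to x'\to w\to z'\to z$ gives $z\in O_{\omega_2\omega_0\omega_0\omega_2}(x)$. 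Together with the case $x\in B$, the single word $\omega^\ast:=\omega_1\,\omega_2\,\omega_0\,\omega_0\,\omega_2$ then satisfies $O_{\omega^\ast}(x)=G\cdot x$ for every $x\in X$ (for the unused blocks one uses $e\in P_\omega$), which is the assertion.

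The hard part is precisely this last step: extracting \emph{one} word valid simultaneously for all $x$, in particular over the small-dimensional orbits where $\Omega_{\omega_0}$ degenerates. This is what forces the use of the quasi-compactness ``escape lemma'' and of Noetherian induction on $G$-stable closed subsets; everything else is Chevalley's theorem on constructible images and on dimensions of fibres, together with the fact that two nonempty open subsets of an irreducible variety meet.
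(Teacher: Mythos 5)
The paper itself gives no argument for this proposition: it is imported verbatim from \cite[Proposition~1.5]{AFKKZ}, so the only ``proof in the paper'' is the citation, and your write-up has to be judged on its own. On its own it is correct, and its ingredients (constructible images of the composition maps $\mu_\omega$, a ``governing'' word of maximal dimension, generic fibre dimension, and an induction over invariant closed subsets) are exactly the kind of machinery the cited source uses. The key steps all check: $\Omega_{\omega\omega'}$ contains the composed relation and reversal transposes it because each $H\in\cG$ is a subgroup containing $e$; the closures $\overline{\Omega_\omega}$ form a directed family of irreducible closed sets, so a word $\omega_0$ of maximal dimension dominates all of them and $\overline{\Omega_{\omega_0}}=\overline{R}$; the fibre-dimension theorem (components of fibres of $Z\to X$ have dimension at least $\dim Z-\dim X$, while a proper closed subset of $Z$ has generically smaller fibres) does produce the dense open $X^\circ$ on which $O_{\omega_0}(x)$ and $O_{\omega_0^{\mathrm{rev}}}(x)$ are dense in $\overline{G\cdot x}$; $\overline{G\cdot x}$ is irreducible for every $x$ as a directed union of irreducible sets; and the chain $x\to x'\to w\to z'\to z$ lands in $P_{\omega_2}P_{\omega_0}P_{\omega_0}P_{\omega_2}\cdot x$, with identity elements absorbing unused blocks.

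Two points deserve one more line each. First, quasi-compactness gives a word $\omega_2$ with $V_{\omega_2}=V$ but not automatically the same for $\omega_2^{\mathrm{rev}}$; replace $\omega_2$ by the palindrome $\omega_2\omega_2^{\mathrm{rev}}$, which is its own reversal and still covers $V$. Second, set up the Noetherian induction explicitly: prove ``for every $G$-stable closed subset $Y\subseteq X$ there is one word valid at all points of $Y$'', noting that the irreducible components of such $Y$ are $G$-stable because the members of $\cG$ are connected; your argument is then the inductive step with $B=Y\setminus V$ strictly smaller. With these additions the proof is complete.
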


A sequence $\cH=(H_1,\ldots , H_s)$
satisfying condition \eqref{1.2.a} of Proposition~\ref{1.2} is called {\em complete}.

Let us say that a subgroup $G\subseteq\SAut(Y)$ is {\em saturated} if it is generated
by one-parameter unipotent subgroups and there is a complete sequence $(H_1,\ldots,H_s)$
of one-parameter unipotent subgroups in $G$ such that $G$ contains all replicas of
$H_1,\ldots,H_s$. In particular, $G=\SAut(X)$ is a saturated subgroup.

\begin{theorem} \label{mainq}
Let $Y$ be an irreducible quasi-affine
algebraic variety of dimension $\ge 2$ and let  $G\subseteq\SAut(Y)$ be a saturated subgroup,
which acts with an open orbit $O\subseteq Y$. Then $G$ acts on~$O$ infinitely transitively.
\end{theorem}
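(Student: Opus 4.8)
The plan is to repeat the argument behind the affine statement \cite[Theorem~2.2]{AFKKZ} (the saturated-subgroup refinement of Theorem~\ref{main}), checking at each step that it survives the passage to quasi-affine varieties. First I would pass from $Y$ to the orbit $O$ itself. As $O$ is a $G$-orbit it is stable under every one-parameter unipotent subgroup of $G$ and under every replica $\{\exp(tfD)\}$ of a subgroup occurring in a complete sequence (all of these lie in $G$ because $G$ is saturated); by the remark preceding the theorem such a replica is a regular $\GG_a$-action on $Y$, and restricting it to the open $G$-stable subset $O$ gives a regular $\GG_a$-action on $O$. Hence we may assume $Y=O$, so that $G$ acts transitively on an irreducible quasi-affine variety $O$ with $\dim O\ge 2$, on which $\Gamma(O,\Of)$ still separates points. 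I would then induct on $m$: the case $m=1$ is the hypothesis, and for the inductive step, given two $m$-tuples of pairwise distinct points, I apply $(m-1)$-transitivity to make their first $m-1$ entries coincide, reducing to the claim that the stabilizer $G_{p_1,\dots,p_{m-1}}$ of $m-1$ distinct points $p_1,\dots,p_{m-1}\in O$ acts transitively on $O\setminus\{p_1,\dots,p_{m-1}\}$.

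To prove this claim I would show that every orbit of $G_{p_1,\dots,p_{m-1}}$ contained in $U:=O\setminus\{p_1,\dots,p_{m-1}\}$ is open in $O$. Fix $q\in U$ and a complete sequence $(H_1,\dots,H_s)$ of one-parameter unipotent subgroups of $G$, with $H_j=\{\exp(tD_j)\}$. For each $j$ one chooses $f_j\in\Ker D_j$ that vanishes at $p_1,\dots,p_{m-1}$ but not at the relevant points of the corresponding $H_j$-orbit; then each replica $\widetilde H_j=\{\exp(tf_jD_j)\}$ fixes $p_1,\dots,p_{m-1}$, so $\widetilde H_1\cdots\widetilde H_s\subseteq G_{p_1,\dots,p_{m-1}}$. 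Since $f_j$ is constant along $H_j$-orbits, wherever the $f_j$ do not vanish the groups $\widetilde H_j$ and $H_j$ sweep out the same orbits; combined with completeness, which gives $H_1\cdots H_s\cdot q=O$, the standard rank argument from \cite[Section~2]{AFKKZ} then shows that the orbit map $\AA^s\to O$ of $\widetilde H_1\cdots\widetilde H_s$ through $q$ is dominant, so in characteristic zero its image $\widetilde H_1\cdots\widetilde H_s\cdot q$ contains a nonempty open subset of $O$. Thus $G_{p_1,\dots,p_{m-1}}\cdot q$ is open. Since $U$ is irreducible it cannot be split into two nonempty disjoint open subsets, so $G_{p_1,\dots,p_{m-1}}$ has a single orbit on $U$, as required.

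The main obstacle is to produce the replica functions $f_j$. For a fixed $j$ such an $f_j$ can exist only if $\Ker D_j\ne\KK$, which holds precisely because $\dim O\ge 2$: the field of invariant rational functions of a nontrivial $\GG_a$-action on $O$ has transcendence degree $\dim O-1\ge 1$, and a standard argument with the ideal of denominators shows that it is the fraction field of $\Ker D_j$, so $\Ker D_j$ contains nonconstant functions. Moreover $f_j$ cannot exist if $q$ lies on the $H_j$-orbit of some $p_i$, since then every element of $\Ker D_j$ takes equal values at $q$ and $p_i$; and for the values $f_j(q)$, $f_j(p_1),\dots,f_j(p_{m-1})$ to be in the position required above one needs suitable genericity of $q$. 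Both issues are handled by first applying replicas drawn from the other members of the complete sequence to move $q$ off the proper closed subset $\bigcup_{i,j}\overline{H_j\cdot p_i}$ of $O$ and into general position, iterating as needed; this bookkeeping is carried out exactly as in \cite{AFKKZ}. The only genuinely new points compared with the affine case are the two quasi-affine facts isolated above --- that replicas act regularly and that regular functions separate points --- both of which have already been recorded in the text.
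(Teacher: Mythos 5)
Your overall skeleton (induction on $m$, stabilizer elements built from replicas of a complete sequence, a dominance argument showing a stabilizer orbit contains a dense open subset, plus the two quasi-affine inputs: regularity of replicas and Rosenlicht for $\GG_a$-invariants) matches the paper, and your openness argument for a point in general position is essentially the paper's Lemma~\ref{lemz}. The gap is in the reduction you chose and the step meant to repair it. You reduce to the claim that the stabilizer $G_{p_1,\dots,p_{m-1}}$ of an \emph{arbitrary} $(m-1)$-tuple acts transitively on $O\setminus\{p_1,\dots,p_{m-1}\}$, and when $q$ is badly placed you propose to move it into general position ``by replicas drawn from the other members of the complete sequence'' --- but these replicas must themselves fix $p_1,\dots,p_{m-1}$, and such replicas can all fix $q$ as well. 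Concretely, take $Y=O=\AA^2$ with the complete sequence $H_1=\{\exp(t\partial_x)\}$, $H_2=\{\exp(t\partial_y)\}$ and $G$ generated by these and all their replicas (a saturated subgroup), and take $p_1=(0,0)$, $p_2=(1,1)$, $q=(0,1)$: every replica $f(y)\partial_x$ with $f(0)=f(1)=0$ fixes the whole line $y=1$, hence $q$, and every replica $g(x)\partial_y$ with $g(0)=g(1)=0$ fixes the line $x=0$, hence $q$. So the subgroup of $G_{p_1,p_2}$ your construction actually produces fixes $q$, and your moving step cannot get started; nothing in your sketch supplies the other elements of the stabilizer that do move $q$. (A smaller inaccuracy in the same step: the obstruction to finding $f_j$ is not only ``$q$ lies on the $H_j$-orbit of some $p_i$'' but the possibly larger locus where regular $H_j$-invariants fail to separate $q$ from $p_i$, which matters precisely when the $p_i$ lie outside the Rosenlicht locus.)

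The paper avoids proving transitivity of the stabilizer of an arbitrary tuple altogether. Its key extra ingredient is Lemma~\ref{lemV}: using elements of $G$ that are \emph{not} required to fix anything, one first moves the entire configuration --- the $m-1$ matched points together with both candidates for the last point --- into a ``regular'' position (all points in the good Rosenlicht set $V$, pairwise distinct $H_k$-orbits for every $k$); this lemma needs its own induction on the number of bad triples and is where $\dim O\ge 2$ is used. Only for such regular tuples does one run your openness argument (Lemma~\ref{lemz}), and then one does not need transitivity of the stabilizer: the two stabilizer orbits of the last two points each contain dense open subsets of the irreducible $O$, hence meet, and conjugating back by $g=(g'')^{-1}g'''g''g'$ finishes the induction step. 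This conjugation --- moving the points to be stabilized, not just the point to be transported --- is the idea missing from your write-up; ``bookkeeping exactly as in \cite{AFKKZ}'' in fact refers to this different route, which is incompatible with your requirement that the moving elements lie in $G_{p_1,\dots,p_{m-1}}$. A secondary caution: in your preliminary reduction to $Y=O$, the replicas you may use must come from invariant functions in $\Gamma(Y,\Of)$ (those are the ones guaranteed to lie in the saturated subgroup $G$), not from arbitrary elements of $\Gamma(O,\Of)$.
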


\begin{remark} \label{exanew}
Let $H$ be a one-parameter unipotent subgroup of $G$.
According to \cite[Theorem~3.3]{PV}, the field of rational invariants $\KK(Y)^H$ is the field
of fractions of the algebra $\KK[Y]^H$ of regular invariants. Hence, by Rosenlicht's Theorem (see~\cite[Proposition~3.4]{PV}), regular invariants separate orbits on an $H$-invariant open
dense subset $U(H)$ in $Y$. Furthermore, $U(H)$ can be chosen to be contained in $O$ and
consisting of 1-dimensional $H$-orbits.
\end{remark}

For the remaining part of this section we fix the following notation.
Let $H_1,\ldots, H_s$ be a complete sequence of one-parameter unipotent subgroups in $G$.
We choose subsets $U(H_1),\ldots,U(H_s)\subseteq O$ as in Remark~\ref{exanew} and let
$$
V=\bigcap_{k=1}^s U(H_k)\,.
$$
In particular, $V$ is open and dense in $O$. We say that a set of points $x_1,\ldots,x_m$
in $Y$ is {\em regular}, if $x_1,\ldots,x_m\in V$ and $H_k\cdot x_i\ne H_k\cdot x_j$ for all
$i,j=1,\ldots,m$, $i\ne j$, and all $k=1,\ldots,s$.

\begin{remark} \label{fiberwise}
For any $H_k$, any 1-dimensional $H_k$-orbit $O_1,\ldots, O_r$ intersecting $V$ and any $p=1,\ldots,s$ we may choose a replica $H_{k,p}$ such that all $O_q$ but $O_p$ are pointwise $H_{k,p}$-fixed. To this end,
we find $H_k$-invariant functions $f_{k,p,p'}$ such that $f_{k,p,p'}|_{O_p}=1$, $f_{k,p,p'}|_{O_{p'}}=0$.
Then we take
$$
H_{k,p}=\{\, \exp(t(\prod_{p'\ne p}f_{k,p,p'}) D_k) \ ; \ t\in\KK\, \},
$$
where $D_k$ is a locally nilpotent derivation corresponding to $H_k$.
\end{remark}

\begin{lemma}\label{lemV}
For every points $x_1,\ldots,x_m\in O$ there exists an element $g\in G$ such that
the set $g\cdot x_1,\ldots,g\cdot x_m$ is regular.
\end{lemma}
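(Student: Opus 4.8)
The plan is to move the points $x_1,\ldots,x_m$ into the dense open set $V$ one at a time, using the group $G$, and then to separate their $H_k$-orbits. First I would observe that $G$ acts transitively on its open orbit $O$ and that $V$ is dense and open in $O$; hence for each index $i$ the set $\{g\in G : g\cdot x_i\in V\}$ is, in a suitable sense, ``large''. More precisely, since $G$ is algebraically generated, Proposition~\ref{1.2} gives a complete sequence, and the orbit map through any point is dominant onto $O$; so for a single point $x_i\in O$ the preimage of $V$ under the orbit map is a nonempty open subset of the generating groups. Iterating, one shows that the set of $g\in G$ with $g\cdot x_i\in V$ for \emph{all} $i=1,\ldots,m$ simultaneously is nonempty: each condition is open and dense along an appropriate algebraic family of automorphisms (a product $H_1\cdot\ldots\cdot H_s$ of the complete sequence, or finitely many copies thereof), and a finite intersection of dense open subsets of an irreducible variety is again dense open. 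This produces $g_0\in G$ with $g_0\cdot x_1,\ldots,g_0\cdot x_m\in V$.

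Next I would arrange the orbit-separation condition $H_k\cdot x_i\neq H_k\cdot x_j$ for $i\neq j$ and all $k$. After applying $g_0$ we may assume $x_1,\ldots,x_m\in V$, and by the choice of $V\subseteq\bigcap_k U(H_k)$ all the orbits $H_k\cdot x_i$ are one-dimensional and are separated by regular $H_k$-invariants on $U(H_k)$. If for some pair $i\neq j$ and some $k$ we have $H_k\cdot x_i=H_k\cdot x_j$, I would use a replica of some $H_\ell$ (with $\ell$ chosen so that the $H_\ell$-orbit through $x_i$ is transverse to the $H_k$-orbit, which is possible because $V$ lies in the open orbit $O$ and $\dim Y\ge 2$) to move $x_i$ off the orbit $H_k\cdot x_j$ while fixing all the other points $x_p$, $p\neq i$, pointwise. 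This is exactly the fiberwise-replica construction of Remark~\ref{fiberwise}: choose $H_\ell$-invariant functions vanishing on the other points' orbits and equal to $1$ on the orbit through $x_i$, and take the corresponding replica. A generic element of this replica moves $x_i$ to a new point of $V$ (genericity keeps us inside the dense open $V$) lying on a different $H_k$-orbit, and crucially leaves $x_p$ fixed for $p\ne i$, so no previously achieved separations are destroyed.

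Finally I would run this correction finitely many times. There are only finitely many pairs $(i,j)$ and finitely many indices $k$, hence finitely many ``bad'' coincidences to remove; each step removes at least one coincidence without introducing new ones among the already-corrected points (because the replica used fixes all other points), and each step keeps all points in $V$. After all steps the set $g\cdot x_1,\ldots,g\cdot x_m$ is regular, where $g\in G$ is the composition of $g_0$ with the finitely many replica elements used. The main obstacle is the bookkeeping in the correction step: one must be sure that when separating the pair $(i,j)$ for the index $k$, the replica can be chosen to fix \emph{all} other points (not just their orbits) and to keep $x_i$ inside $V$ and inside all the $U(H_k)$; this is where Remark~\ref{fiberwise} and the density of $V$ in the open orbit $O$ do the real work, together with the fact that transversality of two one-dimensional orbits at a point is available because $\dim Y\ge 2$ and $V$ lies in a single $G$-orbit.
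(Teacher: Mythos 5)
Your overall strategy matches the paper's: first push all the points into $V$ by a generic element built from the complete sequence, then remove the finitely many coincidences $H_k\cdot x_i=H_k\cdot x_j$ one at a time by acting with a suitable one-parameter subgroup, using genericity to keep the points in $V$ and to preserve separations already achieved. However, there is a genuine gap in the correction step. Your plan is to pick $\ell$ so that $H_\ell$-orbits are ``transverse'' to $H_k\cdot x_i$ and then take a replica of $H_\ell$, built from $H_\ell$-invariant functions equal to $1$ on $H_\ell\cdot x_i$ and $0$ on the orbits of the other points, so that it moves $x_i$ while fixing every $x_p$, $p\ne i$, pointwise. This construction is impossible precisely when $H_\ell\cdot x_i=H_\ell\cdot x_j$ (or more generally $H_\ell\cdot x_i=H_\ell\cdot x_p$ for some $p\ne i$): an $H_\ell$-invariant function takes the same value at $x_i$ and $x_j$, so every replica of $H_\ell$ either moves both points along their common orbit or fixes both. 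Your transversality requirement only guarantees that $H_\ell$-orbits meet $H_k\cdot x_i$ in finitely many points; it does not rule out $H_\ell\cdot x_i=H_\ell\cdot x_j$, and one cannot in general choose another $\ell$ avoiding this (the complete sequence is fixed, and all admissible indices may have this defect). This is exactly the case the paper treats separately: when $H_l\cdot x_i=H_l\cdot x_j$, it finds $h_l\in H_l$ with $h_l\cdot x_i=x_j$ and argues that the points $h_l^n\cdot x_i$ meet each $H_k$-orbit only finitely often, so $h\cdot x_i$ and $h\cdot x_j$ lie in different $H_k$-orbits for an open set of $h\in H_l$. Without an argument of this kind your induction can stall.

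A secondary, related point: insisting that the correcting element fix all other points pointwise is both stronger than needed and sometimes unattainable (for the same reason as above, applied to any $x_p$ in the orbit $H_\ell\cdot x_i$). The paper avoids this by letting all points move and observing that the conditions ``$h\cdot x_1,\ldots,h\cdot x_m\in V$'' and ``already separated pairs stay separated'' are open and nonempty on the one-parameter subgroup being used (its conditions (C1) and (C2)), so a generic $h$ strictly decreases the number of bad triples. If you replace your ``fix everything else pointwise'' requirement by this openness argument, and add the missing case $H_\ell\cdot x_i=H_\ell\cdot x_j$, your proof becomes essentially the paper's.
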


\begin{proof}
For any $x_i$ there holds $V\subset O=H_1\cdots H_s \cdot x_i$. The condition $h_1\cdots h_s \cdot x\in V$ is open and nonempty, hence we obtain an open subset $W\subset H_1\times\ldots\times H_s$ such that $h_1\cdots h_s \cdot x_i\in V$ for all
$(h_1,\ldots,h_s)\in W$ and all $x_i$.

So we may suppose that $x_1,\ldots,x_m\in V$. Let $N$ be the number of triples $(i,j,k)$
such that $i\ne j$ and $H_k\cdot x_i=H_k\cdot x_j$.
If $N=0$ then the lemma is proved. Assume that $N\ge 1$ and fix such a triple $(i,j,k)$.

There exists $l$ such that $H_k\cdot x_i$ has at most finite intersection with $H_l$-orbits;
otherwise $H_k\cdot x_i$ is invariant with respect to all $H_1,\ldots,H_s$,
a contradiction with the condition $\dim O\ge 2$.

We claim that there is a one-parameter subgroup $H$ in $G$ such that
\begin{equation} \label{Hk-condition}
H_k\cdot (h\cdot x_i)\neq H_k\cdot (h\cdot x_j)\quad \mbox{ for all
but finitely many elements} \ h\in H.
\end{equation}

Let us take first $H=H_l$.
Condition (\ref{Hk-condition}) is determined by a finite set of $H_k$-invariant functions. So, either it holds or $H_k\cdot (h\cdot x_i)= H_k\cdot (h\cdot x_j)$ for all $h\in H$.

Assume that $H_l\cdot x_i\neq H_l \cdot x_j$. By Remark~\ref{fiberwise} there exists a replica $H_l^\prime$ such that $H_l^\prime\cdot x_i=x_i$, but $H_l^\prime\cdot x_j=H_l\cdot x_j$. We take $H=H_l^\prime$, and condition (\ref{Hk-condition}) is fulfilled.

Assume now the contrary. Then there exists $h_l\in H_l$ such that $h_l\cdot x_i=x_j$.
Then the set $\{h_l^n\cdot x_i\;|\; n\in\ZZ_{>0}\}$ has finite intersection with any $H_k$-orbit, and $h_l^n\cdot x_j=h_l^{n+1}\cdot x_i$ lie in different $H_k$-orbits for an infinite set of $n\in\ZZ_{>0}$. Therefore, this holds for an open subset of $H_l$, and condition (\ref{Hk-condition}) is again fulfilled.

Finally, the following conditions are open and nonempty on $H$:
\begin{enumerate}
\item[(C1)] $h\cdot x_1,\ldots,h\cdot x_m\in V$;
\item[(C2)] if $H_p\cdot x_{i'}\neq H_p\cdot x_{j'}$ for some $p$ and $i'\neq j'$,
then $H_p\cdot (h\cdot x_{i'})\neq H_p\cdot (h\cdot x_{j'})$.
\end{enumerate}

Hence there exists $h\in H$ satisfying (C1), (C2), and condition
(\ref{Hk-condition}). We conclude that for the set $(h\cdot x_1,\ldots,h\cdot x_m)$
the value of $N$ is smaller, and proceed by induction.
\end{proof}

\begin{lemma} \label{lemz}
Let $x_1,\ldots,x_m$ be a regular set and $G(x_1,\ldots,x_{m-1})$ be the intersection
of the stabilizers of the points $x_1,\ldots,x_{m-1}$ in $G$. Then the orbit
$G(x_1,\ldots,x_{m-1})\cdot x_m$ contains an open subset in $O$.
\end{lemma}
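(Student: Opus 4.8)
\emph{Proof proposal.}
The plan is to produce inside $G(x_1,\ldots,x_{m-1})$ an \emph{algebraically generated} subgroup $\Gamma$ with $\dim(\Gamma\cdot x_m)=\dim O$. Once this is done we are finished: by Proposition~\ref{1.2} the orbit $\Gamma\cdot x_m$ is the image of an irreducible variety under a morphism, hence an irreducible constructible set, and having dimension $\dim O$ it is dense in $O$, so it contains an open subset of $O$, which is also contained in $G(x_1,\ldots,x_{m-1})\cdot x_m$. To construct $\Gamma$, fix for each $k$ a locally nilpotent derivation $D_k$ corresponding to $H_k$. Since $x_1,\ldots,x_m$ is a regular set, these points lie in $V\subseteq U(H_k)$, the orbits $H_k\cdot x_1,\ldots,H_k\cdot x_m$ are one-dimensional and pairwise distinct, and by Remark~\ref{exanew} regular $H_k$-invariants separate them; hence we may choose $f_k\in\Ker D_k$ vanishing on $H_k\cdot x_1\cup\cdots\cup H_k\cdot x_{m-1}$ with $f_k(x_m)\ne0$ and put $D_k':=f_kD_k$, $H_k':=\{\exp(tD_k')\mid t\in\KK\}$. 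Each $H_k'$ is a replica of $H_k$, so it belongs to the saturated group $G$, and it fixes $\div(f_k)\supseteq H_k\cdot x_1\cup\cdots\cup H_k\cdot x_{m-1}$ pointwise, so $H_k'\subseteq G(x_1,\ldots,x_{m-1})$. Let $\Gamma$ be the subgroup of $G(x_1,\ldots,x_{m-1})$ generated by all one-parameter unipotent subgroups it contains; then $\Gamma$ is algebraically generated, contains $H_1',\ldots,H_s'$, and is normalised by $G(x_1,\ldots,x_{m-1})$, so $\Gamma\cdot x_m$ is a smooth homogeneous $\Gamma$-space and $T_{g\cdot x_m}(\Gamma\cdot x_m)=g_*\,T_{x_m}(\Gamma\cdot x_m)$ for all $g\in\Gamma$.

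Next I would compute $T_{x_m}(\Gamma\cdot x_m)$ on the open dense subset
$$\Omega:=\{z\in O\mid f_k(z)\ne0\ \text{and}\ \dim(H_k\cdot z)=1\ \text{for all }k\}\ni x_m.$$
For $z\in\Omega\cap(\Gamma\cdot x_m)$ the orbit $H_k'\cdot z$ is one-dimensional with tangent vector $D_k'(z)=f_k(z)D_k(z)$, a nonzero multiple of $D_k(z)$; since $H_k'\subseteq\Gamma$ this gives $D_k(z)\in T_z(\Gamma\cdot x_m)$ for all $k$. The key claim is that $\eta(z)\in T_z(\Gamma\cdot x_m)$ for every such $z$ and every iterated Lie bracket $\eta$ of $D_1,\ldots,D_s$, proved by induction on the length of $\eta$, the length-one case being what was just observed. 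Write $\eta=[D_l,\eta']$ with $\eta'$ of smaller length. For all $\tau$ outside a finite set the point $g\cdot z$ with $g=\exp(\tau D_l')$ lies in $\Omega\cap(\Gamma\cdot x_m)$, so by the inductive hypothesis $\eta'(g\cdot z)\in T_{g\cdot z}(\Gamma\cdot x_m)$; applying $(g^{-1})_*$ and using that $\Gamma\cdot x_m$ is $\Gamma$-homogeneous yields $\bigl((\exp(-\tau D_l'))_*\eta'\bigr)(z)\in T_z(\Gamma\cdot x_m)$. The left-hand side is a polynomial curve in $\tau$ taking values in the fixed subspace $T_z(\Gamma\cdot x_m)$, so all of its Taylor coefficients lie there, in particular the linear coefficient $-[D_l',\eta'](z)$. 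Since $[D_l',\eta']=f_l\,[D_l,\eta']-\eta'(f_l)\,D_l$ with $f_l(z)\ne0$ and $D_l(z)\in T_z(\Gamma\cdot x_m)$, we conclude $\eta(z)=[D_l,\eta'](z)\in T_z(\Gamma\cdot x_m)$, which proves the claim.

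To finish, I would invoke completeness of $(H_1,\ldots,H_s)$: since $\langle H_1,\ldots,H_s\rangle\cdot x_m\supseteq H_1\cdots H_s\cdot x_m=G\cdot x_m=O$, the set $O$ is a single orbit of the algebraically generated group $\langle H_1,\ldots,H_s\rangle$, whose tangent space at $x_m$ is spanned by the values at $x_m$ of the iterated brackets of $D_1,\ldots,D_s$; hence by the key claim applied at $z=x_m$ we obtain $T_{x_m}(\Gamma\cdot x_m)=T_{x_m}O$ and therefore $\dim(\Gamma\cdot x_m)=\dim O$, completing the plan. The step I expect to be the main obstacle is precisely this tangent space computation: a replica fixing $x_1,\ldots,x_{m-1}$ contributes only the direction $D_k(x_m)$ at $x_m$, and these directions need not span $T_{x_m}O$ since the configuration is not assumed flexible; the mechanism that escapes this span is to compose replicas, exploit that nearby points carry fresh replicas, and differentiate the composition in the replica parameters so as to manufacture the Lie brackets of the $D_k$, the complete sequence guaranteeing that iterating this recovers the whole tangent space. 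A minor additional point is that $G(x_1,\ldots,x_{m-1})$ need not be visibly algebraically generated, which is why one passes to the subgroup $\Gamma$ before applying Proposition~\ref{1.2}.
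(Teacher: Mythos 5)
Your construction of the replicas $H_k'=\exp(\KK f_kD_k)\subseteq G(x_1,\ldots,x_{m-1})$ is sound (it is the same device as Remark~\ref{fiberwise}), and your ``key claim'' that iterated Lie brackets of $D_1,\ldots,D_s$ are tangent to $\Gamma\cdot x_m$ at points of $\Omega$ is correctly argued. The genuine gap is the final step: the assertion that, because $(H_1,\ldots,H_s)$ is complete, $T_{x_m}O$ is spanned by the values at $x_m$ of iterated brackets of $D_1,\ldots,D_s$. Completeness only gives $O=H_1\cdots H_s\cdot x_m$; by generic smoothness of the multiplication map $(h_1,\ldots,h_s)\mapsto h_1\cdots h_s\cdot x_m$ and homogeneity, what follows is that the tangent space of $O$ is spanned by \emph{pushforwards} of the generating fields along group elements, i.e.\ by vectors of the form $d(h_1\cdots h_{k-1})\bigl(D_k(h_k\cdots h_s\cdot x_m)\bigr)$, not by bracket evaluations at the point itself. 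Converting such pushforwards into iterated brackets at $x_m$ is precisely the content of a Hermann--Nagano (orbit) theorem in the algebraic category: it is true in characteristic zero, and it can be proved by the same polynomial-in-$\tau$ expansion you use (one shows the distribution spanned by the bracket-generated Lie algebra is invariant under the flows $\exp(tD_k)$), but it is a substantive statement that you neither prove nor can cite --- it is not contained in Proposition~\ref{1.2} nor in the results of \cite{AFKKZ} invoked in this paper. Note also that your key claim supplies only the easy inclusion (brackets lie in the tangent space of the $\Gamma$-orbit); the inclusion your plan actually needs, $T_{x_m}O\subseteq\mathrm{span}\{\eta(x_m)\}$, is the hard direction. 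As written, the proof is therefore incomplete at its decisive point.

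It is worth comparing with the paper's argument, which avoids infinitesimal considerations altogether. There one lets $Z$ be the union of the orbits $H_k\cdot x_i$, $i\le m-1$, and takes the open set $U\subseteq H_1\times\cdots\times H_s$ of tuples for which every partial product $h_r\cdots h_s\cdot x_m$ stays in $V\setminus Z$; then, inductively in $r$, each factor $h_r$ is replaced via Remark~\ref{fiberwise} by an element of a replica of $H_r$ that fixes $x_1,\ldots,x_{m-1}$ and has the same orbit through the moving point, so that $h_1\cdots h_s\cdot x_m\in G(x_1,\ldots,x_{m-1})\cdot x_m$ for all $(h_1,\ldots,h_s)\in U$; since $U\to O$ is dominant, its image contains an open subset of $O$. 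If you want to salvage your route, you must either prove the algebraic Nagano-type statement you are relying on, or replace the tangent-space computation by this kind of global replica-substitution argument.
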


\begin{proof}
We claim that there is a nonempty open subset $U\subseteq H_1\times\ldots\times H_s$ such
that for every $(h_1,\ldots,h_s)\in U$ we have
$$
h_1\ldots h_s\cdot x_m =g\cdot x_m \quad \text{for some} \ g\in G(x_1,\ldots,x_{m-1}).
$$
Indeed, let $Z$ be the union of orbits $H_k\cdot x_i$, $k=1,\ldots,s$, $i=1,\ldots,m-1$.
The set $V\setminus Z$ is open and contains $x_m$. Let $U$ be the set of all
$(h_1,\ldots,h_s)$ such that $h_r\ldots h_s\cdot x_m\in V\setminus Z$ for any
$r=1,\ldots,s$. Then $U$ is open and nonempty. Let us show that for any
$(h_1,\ldots,h_s)\in U$ and any $r=1,\ldots,s$ the point $h_r\ldots h_s\cdot x_m$
is in the orbit $G(x_1,\ldots,x_{m-1})\cdot x_m$. Assume that
$h_{r+1}\ldots h_s\cdot x_m\in G(x_1,\ldots,x_{m-1})\cdot x_m$.
By Remark~\ref{fiberwise}, there is a replica $H_r'$ of the subgroup
$H_r$ which fixes $x_1,\ldots,x_{m-1}$ and such that the orbits
$$
H_r\cdot (h_{r+1}\ldots h_s\cdot x_m) \quad \text{and} \quad
H_r'\cdot (h_{r+1}\ldots h_s\cdot x_m)
$$
coincide. Then $H_r'$ is contained in $G(x_1,\ldots,x_{m-1})$ and the point
$h_rh_{r+1}\ldots h_s\cdot x_m$ is in the orbit $G(x_1,\ldots,x_{m-1})\cdot x_m$
for any $h_r\in H_r$.
The claim is proved.

Now the image of the dominant morphism
$$
U\to O, \quad (h_1,\ldots,h_s) \mapsto h_1\ldots h_s\cdot x_m
$$
contains an open subset in $O$.
\end{proof}

\begin{proof}[Proof of Theorem~\ref{mainq}]
Let $(x_1,\ldots,x_m)$ and $(y_1,\ldots,y_m)$ be two sets of pairwise
distinct points in $O$. We have to show that there is an element $g\in G$ such that
$g\cdot x_1=y_1,\ldots,g\cdot x_m=y_m$.

We argue by induction on $m$. If $m=1$, then the claim is obvious. If $m>1$, then
by inductive hypothesis there exists $g'\in G$ such that $g'\cdot x_1=y_1,\ldots,
g'\cdot x_{m-1}=y_{m-1}$. If $g'\cdot x_m=y_m$, the assertion is proved. Assume that
$g'\cdot x_m\ne y_m$. By Lemma~\ref{lemV}, there exists $g''\in G$ such that
the set
$$
g''\cdot y_1,\,\ldots,g''\cdot y_{m-1},\,g''\cdot y_m,\,g''g'\cdot x_m
$$
is regular. Lemma~\ref{lemz} implies that the orbits
$$
G(g''\cdot y_1,\ldots,g''\cdot y_{m-1})\cdot (g''\cdot y_m) \quad \text{and}
\quad G(g''\cdot y_1,\ldots,g''\cdot y_{m-1})\cdot (g''g'\cdot x_m)
$$
intersect, so there is $g'''\in G(g''\cdot y_1,\ldots,g''\cdot y_{m-1})$ such that
$g'''g''g'x_m=g''y_m$. Then the element $g=(g'')^{-1}g'''g''g'$ is as desired.
\end{proof}


\section{Cylinders and $\GG_a$-actions}
\label{sec3}

The following definition is taken from~\cite{KPZ}, see also~\cite{KPZ2}.

\begin{definition}
Let $X$ be an algebraic variety and $U$ be an open subset of $X$.
We say that $U$ is a {\it cylinder} if $U\cong Z\times\AA^1$,
where $Z$ is an irreducible affine variety with $\Cl(Z)=0$.
\end{definition}

\begin{proposition} \label{cylact}
Let $X$ be a smooth algebraic variety with a free finitely generated
divisor class group $\Cl(X)$, $q\colon\widehat{X}\to X$ be
the universal torsor, and $U\cong Z\times\AA^1$ be a cylinder in $X$. Then
there is an action $\GG_a\times\widehat{X}\to\widehat{X}$ such that
\begin{enumerate}
\item[(i)]
the set of $\GG_a$-fixed points is $\widehat{X}\setminus q^{-1}(U)$;
\item[(ii)]
for any point $y\in q^{-1}(U)$ the isomorphism $U\cong Z\times\AA^1$
identifies the subset $q(\GG_a\cdot y)$
with a fiber of the projection $Z\times\AA^1\to Z$.
\end{enumerate}
\end{proposition}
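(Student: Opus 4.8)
The plan is to construct the $\GG_a$-action on $\widehat X$ from a $\GG_a$-action on $U$, and then show it extends over $X$, using the local triviality of the universal torsor. First I would take the $\GG_a$-action on the cylinder $U\cong Z\times\AA^1$ coming from translations in the $\AA^1$-factor; its orbits are exactly the fibers of the projection $\pi\colon Z\times\AA^1\to Z$, which is property (ii). This action corresponds to the locally nilpotent derivation $\partial_t$ on $\KK[U]\cong\KK[Z][t]$, and since $\Cl(Z)=0$, the restriction map on Cox rings is transparent: $\mathcal R|_U$ is the pullback of $\CO_U$ twisted by $K$, but because $U$ is factorial, $q^{-1}(U)\cong U\times H$ as an $H$-variety (local triviality), and moreover one can choose the trivialization so that $\mathcal R(U)\cong\KK[U]\otimes_\KK\KK[K]=\KK[Z][t]\otimes\KK[K]$.

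**Next**, I would lift the derivation $\partial_t$ to $q^{-1}(U)$ by letting it act as $\partial_t\otimes\id$ under the above identification; equivalently, the $\GG_a$-action on $q^{-1}(U)\cong U\times H$ is $s\cdot(z,t,h)=(z,t+s,h)$. This is $H$-equivariant and its orbits project onto the $\AA^1$-fibers, giving (ii). The central issue — and I expect this to be \emph{the main obstacle} — is to show that this $\GG_a$-action on the open subset $q^{-1}(U)$ extends to a regular $\GG_a$-action on all of $\widehat X$, fixing $\widehat X\setminus q^{-1}(U)$ pointwise. Here the recipe from the discussion before Remark~\ref{exanew} is crucial: a locally nilpotent derivation on $\Gamma(Y,\CO)$ need not integrate to a $\GG_a$-action on a non-affine $Y$, but multiplying it by a suitable invariant function (passing to a replica) repairs this. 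So I would look at the derivation $\delta$ on the Cox ring $\mathcal R(X)=\Gamma(\widehat X,\CO)$ obtained by extending the above $\delta$ by zero outside $q^{-1}(U)$ — more precisely, one first checks $\delta$ is defined at least as a rational derivation, with pole along the divisors $\widehat X\setminus q^{-1}(U)$, and then multiplies by an equation of that complement.

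**To make this precise**, note that by Lemma~\ref{leman}-type reasoning the complement $X\setminus U$ is a union of prime divisors whose classes generate $\Cl(X)$; pulling back, $\widehat X\setminus q^{-1}(U)=q^{-1}(X\setminus U)$ is a union of prime divisors $\widehat D_i$ on $\widehat X$. The function $t\in\KK[U]$ extends to a rational function on $X$ (hence to $\widehat X$) with poles only along $X\setminus U$; thus $\partial_t$, written invariantly, is a rational vector field regular on $q^{-1}(U)$. I would clear denominators: choosing an appropriate product $f$ of equations of the $\widehat D_i$ (which is $H$-semiinvariant, and whose relevant power lies in $\Ker\delta$ since $\delta$ kills the $Z$- and $H$-directions), the derivation $f\delta$ becomes a globally defined locally nilpotent derivation on $\mathcal R(X)$ that vanishes identically along each $\widehat D_i$. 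Because $\widehat X$ is quasi-affine, such a locally nilpotent derivation integrates to a regular $\GG_a$-action on $\widehat X$ exactly when it is \emph{everywhere} defined as a vector field, which the vanishing along the boundary guarantees; its fixed locus is then precisely $V(f)\supseteq\widehat X\setminus q^{-1}(U)$, and on $q^{-1}(U)$ its orbits coincide with those of $\delta$ (only the parametrization changes, by the replica principle), so (ii) persists and one gets (i) on $q^{-1}(U)$. A final argument — showing the fixed locus is not larger than $\widehat X\setminus q^{-1}(U)$ — follows since on $q^{-1}(U)$ the function $f$ is nowhere zero (it is a unit there, being built from boundary equations), so $f\delta$ has $1$-dimensional orbits everywhere on $q^{-1}(U)$, completing the proof.
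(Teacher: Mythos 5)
Your construction of the action over the cylinder is exactly the one used in the paper: since $\Cl(U)\cong\Cl(Z)=0$, local triviality gives $q^{-1}(U)\cong Z\times\AA^1\times H$ and the translation action $a\cdot(z,t,h)=(z,t+a,h)$, with orbits mapping to the $\AA^1$-fibers; and, as in the paper, you choose $f\in\Gamma(\widehat{X},\Of)$ cutting out $\widehat{X}\setminus q^{-1}(U)$, observe $f\in\Ker D$, and pass to a replica $f^{\,\bullet}D$ to fix the boundary. The gap is in the decisive last step. Your criterion ``a locally nilpotent derivation on $\Gamma(\widehat{X},\Of)$ integrates to a regular $\GG_a$-action on the quasi-affine $\widehat{X}$ exactly when it is everywhere defined as a vector field'' is false, and the paper itself records the counterexample: $\partial/\partial x_1$ is an everywhere-regular vector field on $\AA^2\setminus\{(0,0)\}$ whose flow is not a regular $\GG_a$-action there. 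Being regular as a vector field does not guarantee that the flow preserves the open subset; what must be checked is invariance of $\widehat{X}$ under the exponential automorphisms of a suitable \emph{affine} model. Moreover, you assert that one fixed multiple $f\delta$ is ``a globally defined locally nilpotent derivation on $\mathcal{R}(X)$''; since $\delta$ only maps $\Gamma(\widehat{X},\Of)$ into $\Gamma(\widehat{X},\Of)[1/f]$ and $\mathcal{R}(X)$ need not be finitely generated (e.g.\ for the blow up of $\PP^2$ at nine general points), no single power of $f$ is guaranteed to clear denominators for all of $\mathcal{R}(X)$, so even this intermediate claim is unjustified.

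The paper closes exactly this hole with Lemma~\ref{imp}: one chooses a finitely generated subalgebra $C\subseteq\Gamma(\widehat{X},\Of)$ containing $f$ and enough functions so that $\widehat{X}\to\Spec C=:\widetilde{X}$ is an open embedding, arranges (by enlarging $C$ inside a finite-dimensional $D$-invariant subspace and replacing $D$ by $f^mD$) that $C$ is invariant, and then replaces $f^mD$ by $D'=f^{m+1}D$ so that $D'(C)\subseteq fC$. The $\GG_a$-action is integrated on the affine variety $\widetilde{X}$, where it is automatic, and the invariance of $\widehat{X}\subseteq\widetilde{X}$ is then checked directly: points of $\widehat{X}\setminus q^{-1}(U)=\widehat{X}\cap\div(f)$ are fixed because $D'(C)\subseteq fC$, while orbits through points of $q^{-1}(U)$ coincide with the original cylinder fibers and hence stay inside $q^{-1}(U)$. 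If you want to repair your argument, you need some substitute for this step (an affine model on which the flow exists, plus an invariance argument for $\widehat{X}$); vanishing of the derivation along the boundary alone, stated at the level of $\mathcal{R}(X)$, does not produce a regular action on $\widehat{X}$. The remaining parts of your proposal (property (ii), absence of fixed points on $q^{-1}(U)$ because $f$ is invertible there and the replica has the same orbits) agree with the paper and are fine.
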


\begin{proof}
Since $\Cl(U)\cong\Cl(Z)=0$, we have an isomorphism $q^{-1}(U)\cong Z\times\AA^1\times H$
compatible with the projection $q$,
see \cite[Remark~I.3.2.7]{ADHL}. Thus the subset $q^{-1}(U)$ admits a $\GG_a$-action
$$
a\cdot (z,t,h)=(z,t+a,h), \quad z\in Z, \ t\in\AA^1, \ h\in H,
$$
with property~(ii). Denote by $D$ the locally nilpotent derivation on
$\Gamma(U,\Of)$ corresponding to this action.

Our aim is to extend the action to $\widehat{X}$. Since the open subset
$q^{-1}(U)$ is affine, its complement $\widehat{X}\setminus q^{-1}(U)$
is a divisor $\Delta$ in $\widehat{X}$. We can find a function
$f\in\Gamma(\widehat{X},\Of)$ such that $\Delta=\div(f)$. In particular,
$$
\Gamma(q^{-1}(U),\Of)=\Gamma(\widehat{X},\Of)[1/f].
$$
Since $f$ has no zero on any $\GG_a$-orbit on $q^{-1}(U)$, it is constant along orbits,
and $f$ lies in $\Ker D$.

\begin{lemma}\label{imp}
Let $Y$ be an irreducible quasi-affine variety,
$$
Y=\bigcup_{i=1}^s\ Y_{g_i}, \qquad g_i\in\Gamma(Y,\Of),
$$
be an open covering by principle affine subsets, and let
$$
\Gamma(Y_{g_i},\Of)=\KK[c_{i1},\ldots,c_{ir_i}][1/g_i]
$$
for some $c_{ij}\in\Gamma(Y,\Of)$. Consider a finitely generated subalgebra $C$ in
$\Gamma(Y,\Of)$ containing all the functions $g_i$ and $c_{ij}$. Then
the natural morphism $Y\to\Spec\,C$ is an open embedding.
\end{lemma}

\begin{proof}
Notice that $\Gamma(Y_{g_i},\Of)=\Gamma(Y,\Of)[1/g_i]=C[1/g_i]$.
This shows that the morphism \linebreak $Y\to\Spec\,C$ induces isomorphisms
$Y_{g_i}\cong (\Spec\,C)_{g_i}$.
\end{proof}

Let $Y=\widehat{X}$ and $\widehat{X}\hookrightarrow \Spec\,C$ be an
affine embedding as in Lemma~\ref{imp} with $f\in C$.
A finite generating set of the algebra $C$ is contained in a
finite dimensional $D$-invariant subspace $W$ of $\Gamma(q^{-1}(U),\Of)$.
Replacing $D$ with $f^mD$ we may assume that $W$ is contained in $\Gamma(\widehat{X},\Of)$.
We enlarge $C$ and assume that it is generated by $W$. Then $C$ is an $(f^mD)$-invariant
finitely generated subalgebra in $\Gamma(\widehat{X},\Of)$ and we have an open
embedding $\widehat{X}\hookrightarrow \Spec\,C=:\widetilde{X}$.

Replacing $f^mD$ with $D':=f^{m+1}D$, we obtain a locally nilpotent derivation $D'$ on $C$
such that $D'(C)$ is contained in $fC$.
The~corresponding $\GG_a$-action on $\widetilde{X}$ fixes all points on $\div(f)$
and has the same orbits on $q^{-1}(U)$. Hence the subset $\widehat{X}\subseteq\widetilde{X}$
is $\GG_a$-invariant and the restriction of the action to $\widehat{X}$
has the desired properties.
The proof of Proposition~\ref{cylact} is completed.
\end{proof}

\begin{remark}
Under the assumption that the algebra $\Gamma(\widetilde{X},\Of)$ is finitely
generated the proof of Proposition~\ref{cylact} is much simpler.
\end{remark}

The following definitions appeared in \cite{Pe}.

\begin{definition} \label{d1}
Let $X$ be a variety and $U\cong Z\times\AA^1$ be a cylinder in $X$.
A subset $W$ of $X$ is said to be {\it $U$-invariant} if
$W\cap U=p_1^{-1}(p_1(W\cap U))$, where $p_1\colon U\to Z$ is the projection
to the first factor. In other words, every $\AA^1$-fiber of the cylinder
is either contained in $W$ or does not meet $W$.
\end{definition}

\begin{definition}
We say that a variety $X$ is {\it transversally covered} by cylinders $U_i$,
$i=1,\ldots,s$, if $X=\bigcup_{i=1}^s U_i$ and there is no proper subset $W\subset X$
invariant under all $U_i$.
\end{definition}

\begin{proposition} \label{propostouse}
Let $X$ be a smooth algebraic variety with a free finitely generated
divisor class group $\Cl(X)$ and $q\colon\widehat{X}\to X$ be
the universal torsor. Assume that $X$ is transversally covered by cylinders.
Then the group $\SAut(\widehat{X})$ acts on $\widehat{X}$ transitively.
\end{proposition}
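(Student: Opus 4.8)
The plan is to combine Proposition~\ref{cylact} with Theorem~\ref{mainq}. Each cylinder $U_i \cong Z_i \times \AA^1$ in the transversal cover gives, via Proposition~\ref{cylact}, a $\GG_a$-action on $\widehat{X}$ whose orbits project onto the $\AA^1$-fibers of $U_i$ and which fixes $\widehat{X}\setminus q^{-1}(U_i)$ pointwise. Let $H_i$ be the corresponding one-parameter unipotent subgroup of $\SAut(\widehat{X})$, and let $G \subseteq \SAut(\widehat{X})$ be the (saturated) group $\SAut(\widehat{X})$ itself — it contains all the $H_i$ and all their replicas by definition. Since $\widehat{X}$ is a smooth, hence irreducible once we restrict to a component, quasi-affine variety of dimension $\ge 2$ (it is an $H$-bundle of rank $r\ge 1$ over a variety of dimension $\ge 1$; one should check the dimension bound, e.g. $\dim \widehat X = \dim X + r \ge 2$ because $X$ must itself be positive-dimensional to be covered by cylinders with $\Cl(Z)=0$), Theorem~\ref{mainq} will give infinite, in particular simple, transitivity on the open $G$-orbit $O$ once we know $G$ acts with an open orbit. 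So the real content is:

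\emph{Claim: the subgroup of $\SAut(\widehat X)$ generated by $H_1,\dots,H_s$ acts on all of $\widehat X$ with a single orbit, i.e. transitively.}

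To prove the claim I would first pass downstairs. Consider the set-theoretic partition of $X$ into orbits of the group $\Gamma$ generated by the "fiberwise translation" flows attached to the $U_i$ — more precisely, declare $x\sim x'$ if they lie in a common $\AA^1$-fiber of some $U_i$, and take the equivalence relation generated by this. Each equivalence class is a $U_i$-invariant subset for every $i$ in the sense of Definition~\ref{d1}: along $U_i$ it is a union of $\AA^1$-fibers, since moving within a fiber of $U_i$ keeps one in the same class. By the transversal covering hypothesis there is no proper subset invariant under all the $U_i$, so there is exactly one class, namely $X$ itself. Now lift this to $\widehat X$: by Proposition~\ref{cylact}(ii), the $H_i$-orbit of a point $y\in q^{-1}(U_i)$ maps under $q$ isomorphically (it is a single $\AA^1$, and $q$ is a bundle, but more carefully the orbit is contained in a fiber of $Z_i\times\AA^1\times H$ over $Z_i\times H$, so $q$ restricted to the orbit is injective) onto the $\AA^1$-fiber through $q(y)$; and $H_i$ fixes every point outside $q^{-1}(U_i)$. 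Hence for $y,y'\in\widehat X$ with $q(y)\sim q(y')$ via a single fiber of $U_i$, after applying a suitable element of $H_i$ we may assume $q(y)=q(y')$, i.e. $y' = h\cdot y$ for some $h\in H$. The outstanding gap is then: close up the $H$-direction.

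The main obstacle is therefore controlling the torus direction — showing that the group generated by the $H_i$ already moves points within a single $H$-fiber, so that transitivity of $\Gamma$ on the base upgrades to transitivity of $\langle H_1,\dots,H_s\rangle$ on $\widehat X$. Here is how I would close it. Fix a cylinder $U_1$ and a point $x_0\in U_1$; it suffices to show $\langle H_1,\dots,H_s\rangle \cdot y_0 \supseteq q^{-1}(x_0)$ for one $y_0\in q^{-1}(x_0)$, since we already have transitivity on the base. Pick a second cylinder $U_2$ meeting $U_1$ in a set not contained in a single $U_1$-fiber (possible because otherwise $U_1$'s fiber through a point would be $U_1$-and-$U_2$-invariant, hence — iterating over the connected cover — a proper invariant subset, contradicting transversality); concretely, choose a point $x_1\in U_1\cap U_2$ whose $U_2$-fiber $F_2$ meets $U_1$ in a curve transverse to the $U_1$-fibration. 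Then the composite flow "push along $H_1$ into $F_2\cap U_1$, push along $H_2$ along $F_2$, push back along $H_1$" is a nontrivial automorphism of $q^{-1}(U_1\cap U_2)$ fixing the base point $x_0$ only after a closed loop — and its commutator-type combinations act on the fiber $q^{-1}(x_0)\cong H$ by translations. More robustly: the subgroup $\Sigma$ of $\SAut(\widehat X)$ generated by the $H_i$ is $H$-normalized? No — instead I would argue that the stabilizer in $\Sigma$ of the base point $x_0$ acts on $q^{-1}(x_0)\cong H$ by a subgroup of translations of the torus $H$, and that this subgroup is all of $H$: if it were a proper closed subgroup $H'\subsetneq H$, then the preimage in $\widehat X$ of a generic $H/H'$-coset structure would descend to a proper $U_i$-invariant subset of $X$ for all $i$ — because the $H_i$-flows respect the $H$-action's structure by Proposition~\ref{cylact} (the action on $q^{-1}(U_i)=Z_i\times\AA^1\times H$ is $h\mapsto h$ on the third factor, i.e. $H$-equivariant) — again contradicting transversality. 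I expect the bookkeeping of "an $H'$-substructure on $\widehat X$ descends to an invariant subset of $X$" to be the technical heart; the geometric input is exactly that the fiberwise $\GG_a$-flows of Proposition~\ref{cylact} commute with the $H$-action, so nothing on the torus is lost, and transversality forbids any nontrivial invariant quotient. Assembling: $\Sigma$ acts transitively on $\widehat X$, hence so does $\SAut(\widehat X)\supseteq\Sigma$, which is the assertion.
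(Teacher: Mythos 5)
Your reduction to the base is fine and matches the paper: the $\GG_a$-subgroups from Proposition~\ref{cylact} project orbits onto $\AA^1$-fibers of the cylinders, so the image in $X$ of any orbit of the generated group is invariant under all $U_i$ and, by transversality, equals $X$. The genuine gap is in the step you yourself flag as the ``technical heart'': ruling out a proper subtorus in the fiber direction. Your proposed mechanism --- that an $H'$-substructure with $H'\subsetneq H$ would ``descend to a proper $U_i$-invariant subset of $X$, contradicting transversality'' --- cannot work, because the candidate orbit already surjects onto $X$; its image downstairs is all of $X$ no matter what $H'$ is, so transversality has been used up and gives no further contradiction. Indeed, for an arbitrary locally trivial $H$-bundle over a transversally covered $X$ the analogous transitivity statement is simply false, so no argument that uses only the bundle structure plus transversality can close this step; some characteristic property of the \emph{universal} torsor must enter, and your proposal never invokes one.

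The paper's proof supplies exactly that missing input. It first observes that $\SAut(\widehat X)$ is normalized by $H$, so an $\SAut(\widehat X)$-orbit $S$ meets every fiber of $q$ in an orbit of a subgroup $H_S\subseteq H$, and $H_S$ is closed because orbits are locally closed (\cite[Proposition~1.3]{AFKKZ}) and $H$ permutes them. If $H_S$ were proper, a nonzero character $\chi$ with $\chi|_{H_S}=1$ defines an associated $\KK^{\times}$-bundle $X_\chi\to X$ (push the transition functions forward by $\chi$); the image of $S$ gives a section of $X_\chi$, so $X_\chi$ is trivial, and pulling back the fiber coordinate along $\widehat X\to X_\chi$ produces a nonconstant invertible function on $\widehat X$. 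This contradicts $\Gamma(\widehat X,\Of)\cong\mathcal R(X)$ having only constant invertible elements --- the universal-torsor property that your argument is missing. (A secondary inaccuracy: the lifted flows of Proposition~\ref{cylact} do not literally commute with $H$ on all of $\widehat X$, since the extension multiplies the derivation by powers of the $H$-semiinvariant function $f$; what is true and what the paper uses is only that $\SAut(\widehat X)$ is normalized by $H$.) To repair your proof you would need to replace the ``descent to a proper invariant subset'' step by an argument of this character-theoretic/invertible-function type.
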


\begin{proof}
Consider a $\GG_a$-action on $\widehat{X}$ associated with
the cylinder $U_i$ as in Proposition~\ref{cylact}.
Let $L_i$ be the corresponding $\GG_a$-subgroup in $\SAut(\widehat{X})$
and $G$ be the subgroup of $\SAut(\widehat{X})$ generated by all the $L_i$.

By Proposition~\ref{cylact}, the projection
of any $G$-orbit on $\widehat{X}$ to $X$ is invariant under all the~cylinders~$U_i$,
and thus this projection coincides with $X$.
In particular, every $\SAut(\widehat{X})$-orbit $S$ on $\widehat{X}$
projects to $X$ surjectively.

Let $H_S$ be the stabilizer of the subset $S$ in~$H$.
The subgroup $\SAut(\widehat{X})$ is normalized by $H$. This yields that
if for some $x\in \widehat{X}$ and $h\in H$ the point $h\cdot x$ lies in $S$,
then $h$ is contained in~$H_S$. In other words, the orbit $S$ intersects every
fibre of the torsor $q\colon \widehat{X}\to X$ in an $H_S$-orbit.

By \cite[Proposition~1.3]{AFKKZ}, any $\SAut(\widehat{X})$-orbit is locally closed in $\widehat{X}$. Since the torus $H$ permutes $G$-orbits, all of them are closed in $\widehat{X}$. This yields that $H_S$ is a closed subgroup of~$H$.

Assume that $H_S$ is a proper subgroup of $H$. Then there is a
nonzero character $\chi\in\XX(H)$ such that $\chi|_{H_S}=1$.
Consider a trivialization covering $X=U_1\cup\ldots\cup U_r$ of
the bundle $q\colon \widehat{X}\to X$, that is $q^{-1}(U_i)\cong U_i\times H$.
Let
$$
\psi_{ij}\colon U_i\cap U_j \to H
$$
be the transition functions of this bundle. We define a locally trivial
$\KK^{\times}$-bundle $X_{\chi}$
over $X$ by gluing the covering $\{U_i\times\KK^{\times}\}$
with the transition functions $U_i\cap U_j \to \KK^{\times}$, $x\mapsto\chi(\psi_{ij}(x))$.
Then the maps
$$
U_i \times H \to U_i \times \KK^{\times}, \quad (x,h)\mapsto (x, \chi(h))
$$
define a surjective morphism $\widehat{X}\to X_{\chi}$. The image of $S$ under
this morphism intersects every fibre of $X_{\chi}\to X$ in one point.
This shows that the $\KK^{\times}$-bundle $X_{\chi}$ is trivial.
Then the pull back of the coordinate function along a fiber of $X_{\chi}$
is an invertible function on $\widehat{X}$.
Since $\widehat{X}$ has only constant invertible functions, we conclude that
$H_S=H$ and thus $S=\widehat{X}$.
This shows that $\SAut(\widehat{X})$ acts on $\widehat{X}$ transitively.
\end{proof}


\section{$A$-covered varieties}
\label{sec4}

The affine space $\AA^n$ admits $n$ coordinate cylinder structures
$\AA^{n-1}\times\AA^1$, and the covering of $\AA^n$ by these cylinders
is transversal. This elementary observation motivates the following definition.

\begin{definition}
An irreducible algebraic variety $X$ is said to be {\it $A$-covered}
if there is an open covering $X=U_1\cup\ldots\cup U_r$, where every
chart $U_i$ is isomorphic to the affine space $\AA^n$.
\end{definition}

A choice of such a covering together with isomorphisms $U_i \cong \AA^n$ is called an \emph{$A$-atlas} of~$X$.
A subvariety $Z$ of an $A$-covered variety $X$ is called \emph{linear} with respect to an $A$-atlas, if it is linear in all charts, i.e. $Z \cap U_i$  is a linear subspace in
$U_i \cong \AA^n$.
Any $A$-covered variety is rational, smooth, and by Lemma~\ref{leman} the group
$\Pic(X)=\Cl(X)$ is finitely generated and free.

\smallskip

Clearly, the projective space $\PP^n$ is $A$-covered.
This fact can be generalized in several ways.

\smallskip

\begin{enumerate} [1)]
\setcounter{enumi}{0}
\item \label{item:toric} Every smooth complete toric variety $X$ is $A$-covered.
\smallskip
\item \label{item:tvars} Every smooth rational complete variety with a torus action
of complexity one is $A$-covered; see the appendix to this paper.
\smallskip
\item \label{item:flag} Let $G$ be a semisimple algebraic group and
be $P$ a parabolic subgroup of $G$. Then the flag variety $G/P$
is $A$-covered. Indeed, a maximal unipotent subgroup $N$ of $G$ acts
on $G/P$ with an open orbit $U$ isomorphic to an affine space. Since
$G$ acts on $G/P$ transitively, we obtain the desired covering.
\smallskip
\item More generally, every smooth complete spherical variety is $A$-covered,
see \cite[Corollary~1.5]{BLV}.
\smallskip
\item \label{item:product} The product of two $A$-covered varieties is again
$A$-covered.
\smallskip
\item \label{item:bundle} Every vector bundle over $\AA^n$
trivializes, and total spaces of vector bundles over $A$-covered varieties are $A$-covered. The same holds for their projectivizations.
\smallskip
\item \label{item:blowupp} If a variety $X$ is $A$-covered and $X'$ is a blow up of $X$
at some point $p\in X$, then $X'$ is $A$-covered.
\smallskip
\item All smooth projective rational surfaces are obtained
either from $\PP^2$, $\PP^1 \times \PP^1$ or from the Hirzebruch surfaces $F_n$
by a sequence of blow ups of points, and thus they are $A$-covered by \ref{item:blowupp}).
\smallskip
\item  \label{item:blowup}
We may generalize the blow up example as follows. The blow up of $X$ in a linear subvariety $Z$ is $A$-covered. Moreover, the strict transforms of linear subvarieties, which either contain $Z$ or do not intersect with it, are linear again (with the choice of an appropriate $A$-atlas). Hence, we may iterate this procedure.
\end{enumerate}

\begin{proof}[Proof of statement~\ref{item:blowup})]
We consider one chart $U$ of the covering on $X$. We may assume, that we blow up $\AA^n=U$ in the linear subspace given by $x_1=\ldots = x_k=0$. By definition, the blow up $X'$ is given in the product $\AA^n\times\PP^{k-1}$
by equations $x_iz_j=x_jz_i$, where $1\le i,j\le k$.  If the homogeneous coordinate $z_j$ equals $1$ for some $j=1,\ldots,k$, then $x_i=x_jz_i$, and we are in the open chart $V_j$ with
independent coordinates $x_j$, $x_s$ with $s > k$, and $z_i$, $i\ne j$. So the variety $X'$ is covered by $k$ such charts.

Let $L$ be a linear subspace in $U$ containing $[x_1=\ldots = x_k=0]$ and given by linear equations $f_i(x_{1},\ldots,x_k)=0$. The strict transform of $L$ is given in $V_j$ by the equations $f_i(z_{1},\ldots,z_{j-1},1,z_{j+1},\ldots,z_k)=0$. After a change of variables $x_j \mapsto x_j-1$ these equations become linear.

Finally, if a linear subvariety $Z'$ does
not meet the linear subvariety $Z$, then $Z'$ does not intersect charts of our atlas that
intersect $Z$, and the assertion follows.
\end{proof}

\begin{example} \label{ex1}
Consider the quadric threefold $Q$. Choose two points and a conic passing through them. Then these are linear subvarieties of $Q$ with respect to an appropriate atlas. Hence, the iterated blow up in the points, first, and then in the strict transform of the conic is $A$-covered.
This variety has number~4.4 in the classification Fano threefolds, 
see Proposition~\ref{fclass} below.
\end{example}

We may use the above observations to take a closer look at Fano threefolds.

\begin{proposition}\label{fclass}
In the classification of Iskovskikh \cite{isk78} and Mori-Mukai \cite{mm81} we have the following (possibly non-complete) list of $A$-covered Fano threefolds:
\begin{enumerate} [a)]
\item \label{item:primelist} $\PP^3$, $Q$, $V_5$ and the Mukai-Umemura threefold $V_{22}'$;
\item \label{item:toriclist} 2.33-2.36, 3.26-3.31, 4.9-4.11, 5.2, 5.3;
\item \label{item:tvarslist} 2.29, 2.30, 2.31, 2.32, 3.18-3.23, 3.24, 4.4, 4.7, 4.8 and (at least) one element of the family 2.24, 3.8 and 3.10 respectively;
\item \label{item:productlist} 5.3-5.8;
\item \label{item:blowuplist}  2.26.
\end{enumerate}
\end{proposition}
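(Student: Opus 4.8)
The plan is to proceed through the list entry by entry, in each case citing the construction among items~\ref{item:toric})--\ref{item:blowup}) above that applies; the grouping in the statement is precisely by this construction, so the proof amounts to a sequence of geometric identifications read off from the classifications \cite{isk78} and \cite{mm81}. Two groups are immediate. The Mori--Mukai numbers listed in~\ref{item:toriclist}) are exactly those of the smooth complete toric Fano threefolds, so item~\ref{item:toric}) applies verbatim. The threefolds in~\ref{item:productlist}) are products $\PP^1\times S$ with $S$ a del Pezzo surface (including the case $S=\PP^1\times\PP^1$); since every del Pezzo surface is a smooth rational surface, it is $A$-covered by the iterated point blow-up argument recorded after item~\ref{item:blowupp}), and item~\ref{item:product}) then finishes. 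For~\ref{item:blowuplist}) one identifies the threefold 2.26 as the blow up of an already-$A$-covered variety along a curve that is linear for an appropriate $A$-atlas and invokes item~\ref{item:blowup}); Example~\ref{ex1}, which disposes of 4.4, is the model for this computation.

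The main work is~\ref{item:tvarslist}). For each of the threefolds 2.29--2.32, 3.18--3.24, 4.4, 4.7, 4.8, and for a suitably chosen member of each of the families 2.24, 3.8, 3.10, I would exhibit an effective action of a two-dimensional torus --- so that the variety becomes a smooth complete rational $T$-variety of complexity one --- and then cite item~\ref{item:tvars}), proved in the appendix. For most entries such an action is visible from the classification: these threefolds are blow ups of $\PP^3$, of $Q$, or of toric threefolds along curves (lines, conics, twisted cubics, elliptic curves, low-degree complete intersections) that can be chosen invariant under a codimension-one subtorus of the ambient torus, or they are the obvious $\PP^1$- and $\PP^2$-bundle analogues; for the three families one singles out the member whose defining equation, branch divisor, or blown-up curve is chosen torus-symmetric. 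Assembling these individual checks is the bulk of the argument.

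There remains the primitive list~\ref{item:primelist}). Here $\PP^3$ is $A$-covered by definition, and $Q=Q^3$ is a generalized flag variety --- for instance $\Sp_4/P$, the Lagrangian Grassmannian realized as a hyperplane section of the Klein quadric --- hence $A$-covered by item~\ref{item:flag}). The remaining two cases, $V_5$ and the Mukai--Umemura threefold $V_{22}'$, are the genuine obstacle: neither is toric or spherical, and their connected automorphism group is only $\mathrm{PGL}_2$, so they carry no two-dimensional torus and none of the previous constructions applies. These I would treat by hand, using the explicit realizations --- $V_5$ as the linear section $\mathrm{Gr}(2,5)\cap\PP^6$ or via its $\SL_2$-action, and $V_{22}'$ as the $\mathrm{PGL}_2$-equivariant compactification attached to a distinguished binary form --- and verifying chart by chart that a suitable open cover consists of copies of $\AA^3$. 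I expect the $A$-atlas for $V_{22}'$ to be the hardest point, since there is no general principle to lean on and the coordinates are delicate; this is the one genuinely new construction the proposition demands.
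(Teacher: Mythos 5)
Your plan coincides with the paper's proof: b) is identified as exactly the toric Fano threefolds, c) is handled by exhibiting $2$-torus actions (with suitably symmetric members of the families 2.24, 3.8, 3.10) and invoking the complexity-one theorem from the appendix, d) are the products of del Pezzo surfaces with $\PP^1$, e) is the blow up of $V_5$ along a linear subvariety via construction 9), and a) is verified directly from explicit equations. The only differences are cosmetic: the paper treats $Q$ by its defining equation rather than as a flag variety, and for $V_5$ and $V_{22}'$ it is exactly as terse as you are, simply asserting that an $A$-atlas can be read off from the defining equations without writing out the charts.
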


\begin{proof}
Existence of an $A$-covering for varieties in a) can be seen directly from defining equations.
List~\ref{item:toriclist}) are exactly the toric Fano threefolds.

The varieties in~\ref{item:tvarslist}) are precisely non-toric Fano threefolds admitting a
complexity one torus action; this is more or less straightforward to check via the description
in~\cite{mm81} , see also \cite{t3folds}. Thus the claim follows from Theorem~\ref{sec:thm-cplx-one}. In families~2.24 and~3.8 we find a 2-torus action on the hypersurface $V(x_1y_1^2+x_2y_2^2+x_3y_3^2) \subset \PP^2 \times \PP^2$ and on the blow up of
this hypersurface in the curve $(*:1:0,\;0:0:1)$, respectively. Moreover, we have a 2-torus action on the blow up of the quadric $Q=V(x_1x_2+x_3x_4+x_5^2)$ in the conics $C_1=Q \cap [x_1=x_2=0]$
and $C_2=Q \cap [x_3=x_4=0]$. This is an element of family~3.10.

The varieties in~\ref{item:productlist}) are precisely the products of del Pezzo surfaces and $\PP^1$. Finally, the varieties~2.26 are obtained from $V_5$ by blow-up in a linear subvariety as explained in~\ref{item:blowup}).
\end{proof}

\begin{remark}\label{rem:non-a-covered}
For Fano threefolds of Picard rank one the list of $A$-covered ones in
Proposition~\ref{fclass}, a) is almost complete. Indeed, by \cite{furu} the varieties
$\PP^3$, $Q$, $V_5$ and $V_{22}$ are the only possible compactifications of $\AA^3$.
In particular, the Fano threefolds $V_{12}$, $V_{16}$, $V_{18}$ and $V_4$ from Iskovskikh's classification \cite{isk78} are rational but not $A$-covered. The situation remains
unclear only for members of family $V_{22}$ different from the Mukai-Umemura threefold $V_{22}'$.

For higher Picard rank we do not expect the list to be complete, but our arguments 1)-9) do not apply for other than the given examples. Consider, for example, the threefold 4.6. This is a blow-up of $\PP^3$ in three disjoint lines. Here, we cannot apply 9) directly,
since the three lines are not linear with respect to the same A-atlas.
\end{remark}


\section{Main results}
\label{sec5}

The following theorem summarizes our results on universal torsors and infinite transitivity.

\begin{theorem} \label{tmain}
Let $X$ be an $A$-covered algebraic variety of dimension at least $2$
and $q\colon\widehat{X}\to X$ be the universal torsor.
Then the group $\SAut(\widehat{X})$ acts on the quasi-affine variety $\widehat{X}$
infinitely transitively.
\end{theorem}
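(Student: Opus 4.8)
The plan is to deduce Theorem~\ref{tmain} from the three main technical results already assembled: Proposition~\ref{propostouse} (transitivity of $\SAut(\widehat X)$ when $X$ is transversally covered by cylinders), Theorem~\ref{mainq} (transitivity on an open orbit of a saturated subgroup implies infinite transitivity on that orbit, for quasi-affine $Y$ of dimension $\ge 2$), and Lemma~\ref{leman} (which guarantees that an $A$-covered variety has $\Cl(X)$ free and finitely generated, so that the universal torsor $q\colon\widehat X\to X$ is defined and $\widehat X$ is a smooth quasi-affine variety). First I would record that an $A$-covered variety satisfies all the standing hypotheses: it is smooth and rational with $\Cl(X)$ free of finite rank by Lemma~\ref{leman}, and it has only constant invertible functions, so $\widehat X$ and $H=\Spec\KK[K]$ exist as in Section~\ref{sec1}.

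The key step is to show that an $A$-covered variety $X$ is \emph{transversally covered by cylinders}. Fix an $A$-atlas $X=U_1\cup\dots\cup U_r$ with $U_i\cong\AA^{n}$. Each chart $U_i\cong\AA^{n}$ carries $n$ coordinate cylinder structures $\AA^{n-1}\times\AA^1$, as noted at the start of Section~\ref{sec4}; pick all of them, obtaining a finite family of cylinders whose union is $X$. I must check there is no proper subset $W\subsetneq X$ invariant under all these cylinders. Suppose $W$ is such a subset and $W\neq\emptyset$. Intersecting with a chart $U_i\cong\AA^{n}$ that meets $W$: since $W\cap U_i$ is invariant under every coordinate projection $\AA^{n}\to\AA^{n-1}$ (it is a union of fibers in each of the $n$ coordinate directions), a short argument shows $W\cap U_i$ must be all of $U_i$ — indeed, if $W\cap U_i$ contains a point $a$, then by invariance in the first coordinate it contains the whole line $a+\KK e_1$, then by invariance in the second coordinate every such line sweeps out the plane $a+\KK e_1+\KK e_2$, and iterating gives $W\cap U_i=U_i$. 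Thus $W$ is open; since $U_i\cap U_j$ is a dense open subset of the irreducible variety $U_j\cong\AA^{n}$ whenever it is nonempty, once $W\supseteq U_i$ and $U_i\cap U_j\neq\emptyset$ the set $W\cap U_j$ is a nonempty subset invariant under all cylinders on $U_j$, hence equals $U_j$; by connectedness of the nerve of the covering (which holds because $X$ is irreducible), $W=X$, a contradiction. Hence the covering is transversal.

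With transversality established, Proposition~\ref{propostouse} gives that $G:=\SAut(\widehat X)$ acts transitively on $\widehat X$; in particular the whole of $\widehat X$ is a single $\SAut(\widehat X)$-orbit, which is trivially open. Since $\widehat X$ is irreducible and quasi-affine, $\dim\widehat X=\dim X+\operatorname{rk}\Cl(X)\ge\dim X\ge 2$, so the dimension hypothesis of Theorem~\ref{mainq} is met. Finally $G=\SAut(\widehat X)$ is a saturated subgroup of itself, as remarked after the definition of saturated. Applying Theorem~\ref{mainq} with $Y=O=\widehat X$ yields that $\SAut(\widehat X)$ acts on $\widehat X$ infinitely transitively, which is the assertion. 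The main obstacle is the transversality verification — more precisely, propagating ``$W\cap U_i=U_i$'' from one chart to the next and ruling out a proper invariant union of charts; this uses irreducibility of $X$ (so that the charts overlap in dense opens) together with the rigidity of coordinate cylinder structures on $\AA^n$. Everything else is a direct citation of the machinery built in Sections~\ref{sec2} and~\ref{sec3}.
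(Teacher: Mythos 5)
Your proposal is correct and follows essentially the same route as the paper: reduce to Proposition~\ref{propostouse} by observing that the $mn$ coordinate cylinders coming from the $A$-atlas cover $X$ transversally, then conclude with Theorem~\ref{mainq} applied to the saturated subgroup $\SAut(\widehat X)$ acting on the open orbit $\widehat X$. Your explicit verification of transversality (sweeping out each chart coordinate by coordinate and propagating across overlapping charts via irreducibility) simply fills in a step the paper asserts without proof.
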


\begin{proof}
If $X$ is covered by $m$ open charts isomorphic
to $\AA^n$, and every chart is equipped with $n$ transversal cylinder structures, then
the covering of $X$ by these $mn$ cylinders is transversal.
By Proposition~\ref{propostouse}, the group $\SAut(\widehat{X})$ acts on
$\widehat{X}$ transitively. Theorem~\ref{mainq} yields that the
action is infinitely transitive.
\end{proof}

Theorem~\ref{tmain} provides many examples of quasi-affine
varieties with rich symmetries.
In particular, if $X$ is a del Pezzo surface, a description of the
universal torsor $q\colon\widehat{X}\to X$ may be found in \cite{BaPo}, \cite{SeSk},
\cite{SeSk2}.
It follows from Theorem~\ref{tmain}
that the group $\SAut(\widehat{X})$ acts on $\widehat{X}$ infinitely transitively.

If $X$ is the blow up of nine points
in general position on $\PP^2$, that it is well known that the Cox ring
$\mathcal{R}(X)$ is not finitely generated, and thus $\widehat{X}$ is
a quasi-affine variety with a non-finitely generated algebra of
regular functions $\Gamma(\widehat{X},\Of)$. Theorem~\ref{tmain} works in this case as well.

\begin{theorem} \label{ttmain}
Let $X$ be an $A$-covered algebraic variety of dimension at least $2$.
Assume that the Cox ring $\mathcal{R}(X)$
is finitely generated. Then the total coordinate space
$\overline{X}:=\Spec\,\mathcal{R}(X)$ is an affine factorial variety, the group $\SAut(\overline{X})$ acts on $\overline{X}$ with an open orbit $O$,
and the action of $\SAut(\overline{X})$ on $O$ is infinitely transitive.
\end{theorem}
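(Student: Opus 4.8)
The plan is to deduce this from Theorem~\ref{tmain} by showing that every one-parameter unipotent subgroup of $\SAut(\widehat X)$ extends to one of $\SAut(\overline X)$ which still leaves $\widehat X$ invariant, so that transitivity on $\widehat X$ forces an open $\SAut(\overline X)$-orbit, after which Theorem~\ref{mainq} finishes the job.

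First I would dispose of the structure statement. By Lemma~\ref{leman}, $\Cl(X)$ is free and finitely generated, and by the facts recalled in Section~\ref{sec1} (from \cite{ADHL}) the Cox ring $\mathcal{R}(X)$ is an integral, normal $\KK$-algebra which is a unique factorization domain with only constant units; being finitely generated by hypothesis, $\overline X=\Spec\,\mathcal{R}(X)$ is therefore an affine factorial (hence normal, irreducible) variety. Moreover $\dim\overline X=\dim\widehat X\ge\dim X\ge 2$, and $\Gamma(\overline X,\Of)=\mathcal{R}(X)=\Gamma(\widehat X,\Of)$, the second equality because $\widehat X\hookrightarrow\overline X$ is open with complement of codimension~$\ge 2$.

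The heart of the argument is the extension step. Let $\Gamma\subseteq\SAut(\widehat X)$ be a one-parameter unipotent subgroup; it corresponds to a locally nilpotent derivation $\delta$ of $\Gamma(\widehat X,\Of)=\mathcal{R}(X)$, and since $\overline X$ is affine, $\delta$ integrates to a one-parameter unipotent subgroup $\bar\Gamma\subseteq\SAut(\overline X)$. For $x\in\widehat X$, the two orbit morphisms $\GG_a\to\overline X$ through $x$ --- one via the $\widehat X$-action composed with $\widehat X\hookrightarrow\overline X$, the other via the $\overline X$-action --- induce the same homomorphism $\mathcal{R}(X)\to\KK[t]$, namely $g\mapsto\sum_{n\ge 0}\frac{t^n}{n!}(\delta^n g)(x)$ (a finite sum, as $\delta$ is locally nilpotent); here one uses $\Gamma(\overline X,\Of)=\Gamma(\widehat X,\Of)$. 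Hence $\bar\Gamma\cdot x=\Gamma\cdot x\subseteq\widehat X$, so $\widehat X$ is $\bar\Gamma$-invariant and $\bar\Gamma|_{\widehat X}=\Gamma$. Letting $G\subseteq\SAut(\overline X)$ be generated by all such $\bar\Gamma$, we get that $G$ preserves $\widehat X$ and $G|_{\widehat X}=\SAut(\widehat X)$.

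To conclude: by Theorem~\ref{tmain}, $\SAut(\widehat X)$ acts transitively on $\widehat X$, so fixing $x_0\in\widehat X$ we find, for each $x\in\widehat X$, an element $g\in G$ with $g\cdot x_0=x$; thus $\widehat X\subseteq\SAut(\overline X)\cdot x_0=:O$. The orbit $O$ is locally closed in $\overline X$ by \cite[Proposition~1.3]{AFKKZ}, and since it contains the open dense subset $\widehat X$ it is open. Now $\overline X$ is an irreducible quasi-affine variety of dimension~$\ge 2$ and $\SAut(\overline X)$ is a saturated subgroup of itself, so Theorem~\ref{mainq} applies verbatim and gives infinite transitivity of $\SAut(\overline X)$ on $O$. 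I expect the extension step to be the only real obstacle: the point is precisely that the $\GG_a$-action manufactured on $\overline X$ does not push points of $\widehat X$ out of $\widehat X$, and this is exactly where the codimension~$\ge 2$ of $\overline X\setminus\widehat X$ enters, through the identification $\Gamma(\overline X,\Of)=\Gamma(\widehat X,\Of)$.
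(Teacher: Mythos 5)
Your proposal is correct and follows essentially the same route as the paper: factoriality from the UFD property of $\mathcal{R}(X)$ (via Lemma~\ref{leman} and \cite[Proposition~I.4.1.5]{ADHL}), extension of every $\GG_a$-action from $\widehat{X}$ to the affine variety $\overline{X}$ through the identification $\Gamma(\overline{X},\Of)=\mathcal{R}(X)\cong\Gamma(\widehat{X},\Of)$, so that transitivity on $\widehat{X}$ places it inside a single $\SAut(\overline{X})$-orbit $O$, which is open by \cite[Proposition~1.3]{AFKKZ}, with infinite transitivity on $O$ then following from Theorem~\ref{mainq}. Your comorphism argument for why the extended action preserves $\widehat{X}$ just spells out what the paper leaves implicit.
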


\begin{proof}
Lemma~\ref{leman} shows that the group $\Cl(X)$ is finitely generated and free,
hence the ring $\mathcal{R}(X)$ is a unique factorization domain,
see~\cite[Proposition~I.4.1.5]{ADHL}. Since
$$
\Gamma(\overline{X},\Of)=\mathcal{R}(X)\cong \Gamma(\widehat{X},\Of),
$$
any $\GG_a$-action on $\widehat{X}$ extends to $\overline{X}$.
We conclude that $\widehat{X}$ is contained in one $\SAut(\overline{X})$-orbit $O$
on $\overline{X}$, the action of $\SAut(\overline{X})$ on $O$ is infinitely transitive,
and by \cite[Proposition~1.3]{AFKKZ} the orbit $O$ is open in $\overline{X}$.
\end{proof}

Recall from~\cite{Fr} that the {\it Makar-Limanov invariant} $\text{ML}(Y)$ of an affine
variety $Y$ is the intersection of the kernels of all locally nilpotent derivations
on $\Gamma(Y,\Of)$. In other words $\text{ML}(Y)$ is the subalgebra of
all $\SAut(Y)$-invariants in $\Gamma(Y,\Of)$. Similarly to as in \cite{Lie}
the {\it field Makar-Limanov invariant} $\text{FML}(Y)$ is the subfield of $\KK(Y)$
which consists of all rational $\SAut(Y)$-invariants. If the field Makar-Limanov
invariant is trivial, that is, if $\text{FML}(Y)=\KK$, then so is $\text{ML}(Y)$,
but the converse is not true in general.

\begin{corollary} \label{corML}
Under the assumptions of Theorem~\ref{ttmain} the field Makar-Limanov invariant
$\text{FML}(\overline{X})$ is trivial.
\end{corollary}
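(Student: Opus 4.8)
The plan is to extract the statement directly from the open orbit assertion of Theorem~\ref{ttmain}, with essentially no further work. Let $f$ be an arbitrary element of $\text{FML}(\overline{X})$, i.e. a rational function on $\overline{X}$ fixed by every automorphism in $\SAut(\overline{X})$. Let $O'\subseteq\overline{X}$ denote the dense open locus where $f$ is regular, and let $O\subseteq\overline{X}$ be the open $\SAut(\overline{X})$-orbit furnished by Theorem~\ref{ttmain}. Since $\overline{X}=\Spec\,\mathcal{R}(X)$ is irreducible, $O$ is dense, so $O'\cap O$ is a nonempty, hence dense, open subset of $\overline{X}$ whose points all lie in the single orbit $O$ and at which $f$ is defined.

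Next I would pick two points $x,y\in O'\cap O$. By transitivity of $\SAut(\overline{X})$ on $O$ there is $g\in\SAut(\overline{X})$ with $g\cdot x=y$. Writing the action of $g$ on rational functions as $(g\cdot\varphi)(z)=\varphi(g^{-1}\cdot z)$ and using $g^{-1}\cdot f=f$, one computes
$$
f(x)=(g^{-1}\cdot f)(x)=f(g\cdot x)=f(y),
$$
which is legitimate because both $x$ and $y=g\cdot x$ lie in the regular locus $O'$ of $f$. Thus $f$ takes a constant value on the dense set $O'\cap O$, hence $f\in\KK$. As $f$ was arbitrary, $\text{FML}(\overline{X})=\KK$.

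The only feature of Theorem~\ref{ttmain} actually needed is that $\SAut(\overline{X})$ has a dense orbit: neither the infinite transitivity nor the factoriality of $\overline{X}$ enters the argument. The sole point demanding (minor) care is that the translation by $g$ should take place between points of the common regular locus $O'\cap O$ of $f$, which is unproblematic since that locus is dense. Accordingly I do not expect any real obstacle here — the corollary is a formal consequence of the existence of an open $\SAut$-orbit established in Theorem~\ref{ttmain}.
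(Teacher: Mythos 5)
Your argument is correct and is exactly the paper's proof: the paper also deduces the corollary solely from the existence of the open $\SAut(\overline{X})$-orbit in Theorem~\ref{ttmain}, concluding that any rational invariant is constant; you have merely written out the standard density/transitivity argument that the paper leaves implicit.
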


\begin{proof}
By Theorem~\ref{ttmain}, the group $\SAut(\overline{X})$ acts on $\overline{X}$
with an open orbit. So any rational $\SAut(\overline{X})$-invariant is constant.
\end{proof}

\section*{Appendix: Rational T-varieties of complexity one}
By a $T$-variety we mean a normal variety equipped with an effective action of an algebraic
torus $T$. The difference of dimensions $\dim X -\dim T$ is called the \emph{complexity}
of a $T$-variety. Hence, toric varieties are $T$-varieties of complexity zero.
For the case of complexity one we are going to prove the following theorem.

\begin{theorem} \label{sec:thm-cplx-one}
Any smooth complete rational $T$-variety of complexity one is $A$-covered.
\end{theorem}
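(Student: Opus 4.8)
The plan is to pass to the combinatorial description of $X$ by a divisorial fan on $\PP^1$ and to prove that each of the affine charts making up that description is isomorphic to an affine space; what makes this possible — as opposed to the situation over affine base curves — is the geometry of the complete curve $\PP^1$.

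First I would fix the combinatorial model. By the theory of polyhedral divisors \cite{ah06}, \cite{ahs}, a normal $T$-variety of complexity one is obtained by gluing affine charts $X(\D)$, where the $\D$ range over a divisorial fan $\fan$ living on a smooth curve $Y$; the curve $Y$ has function field $\KK(X)^T$, which has transcendence degree one and lies inside the purely transcendental field $\KK(X)$, hence is itself rational by L\"uroth's theorem, so $Y=\PP^1$. For all but finitely many $z\in\PP^1$ the slice $\fan_z\subseteq N_\QQ$ coincides with the tail fan $\Sigma$; since $X$ is smooth and complete and $\KK$ has characteristic zero, the generic fibre of the induced morphism $X\to\PP^1$, which over the algebraic closure is the toric variety $X_\Sigma$, is smooth and complete, so $\Sigma$ is a smooth complete fan. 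The charts $X(\D)$ for the maximal $\D\in\fan$ are affine and cover $X$, exactly as the charts $U_\sigma$ of the maximal cones cover a toric variety, so it is enough to prove the following local statement: if $\D$ is a proper polyhedral divisor on $\PP^1$ for which $X(\D)$ is smooth, then $X(\D)\cong\AA^{\dim X(\D)}$.

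I would establish this using the smoothness criteria for polyhedral divisors of \cite{ahs}, exploiting two features of the complete base. Because $\D$ lives on $\PP^1$, the degree-zero part of $\Gamma(X(\D),\Of)$ is $H^0(\PP^1,\Of)=\KK$; combined with the fact that invertible elements of a domain graded by the characters of $T$ are homogeneous, this shows that for maximal $\D$ the only units are constants, and it severely rigidifies the possible graded rings $\bigoplus_u H^0(\PP^1,\Of(\D(u)))$. Secondly — and this is the crux — the global sections of the line bundles $\Of(\D(u))$ on the \emph{connected} complete curve $\PP^1$ cannot decompose the way the corresponding modules over $\AA^1$ do; one natural route is to deduce from this that the $T$-fixed point locus of $X(\D)$ is connected, and then to use that a connected fixed point locus of a torus acting on a smooth affine variety must be a linear subspace, forcing $X(\D)$ to be an affine space. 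One can sanity-check the local statement by computing the rings $\bigoplus_u H^0(\PP^1,\Of(\D(u)))$ explicitly in low dimension; and it is consistent with the toric case, since whenever $\fan$ has at most two non-trivial slices the subtorus of $\Aut(\PP^1)$ fixing those points lifts to $X$, so that $X$ is toric and each $X(\D)$ is an affine space by statement~\ref{item:toric}) of Section~\ref{sec4}.

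The main obstacle is exactly this local classification of smooth affine $T$-varieties attached to proper polyhedral divisors on a \emph{complete} curve. Over the affine line the analogous class is strictly larger than affine spaces — it contains the Danielewski surfaces $xy=\prod_i(t-\lambda_i)$, which have two isolated fixed points, and $\SL_2$ — so the proof must genuinely use completeness of $\PP^1$, via cohomology vanishing and connectedness statements on $\PP^1$ together with the factoriality of $\Gamma(X(\D),\Of)$; a formal reduction to the toric case cannot suffice, and indeed the naive induction that removes one special fibre fails because it degrades $\PP^1$ to $\AA^1$ and then to $\GG_m$, over which nothing can be $A$-covered. Once the local statement is proved, the theorem follows immediately from the open cover $X=\bigcup_{\D\in\fan} X(\D)$; for concrete examples such as the Fano threefolds listed in Section~\ref{sec4} one may alternatively combine it with the blow-up results, statements~\ref{item:bundle}) and \ref{item:blowup}).
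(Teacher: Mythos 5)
There is a genuine gap, and it sits exactly at the step you call the crux. Your reduction asserts that every invariant affine chart of $X$, i.e.\ every smooth $X(\D)$ for a proper p-divisor $\D$ on $\PP^1$, is an affine space, and you expect completeness of the base curve to force this. That local statement is false. Properness of $\D$ on $\PP^1$ allows $\deg\D=\emptyset$ (equivalently, some coefficient $\D_y$ is empty), and such charts are in effect p-divisors over $\PP^1$ minus a point, so the very examples you mention as an ``affine-base'' pathology reappear on $\PP^1$: Danielewski-type surfaces and $\SL_2$ (a smooth complexity-one variety for a $2$-torus, with only constant invariants, hence given by a proper p-divisor on $\PP^1$, and not isomorphic to $\AA^3$) do occur as invariant affine charts of smooth complete rational complexity-one $T$-varieties. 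This is precisely why the paper states, after Proposition~\ref{sec:lemma-smooth}, that p-divisors of the second type ($\deg\D=\emptyset$) need not give affine spaces: they do so only when at most two coefficients fail to be lattice translates of the tail cone (Remark~\ref{rem:downgrade}). Your auxiliary claim that a smooth affine $T$-variety with connected fixed locus must be an affine space with linear fixed locus is unproved and essentially assumes the conclusion; it is not available as a tool.

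Because the given invariant covering is not an $A$-covering in general, the actual proof does something you have no substitute for: it builds a \emph{different} set $\fan'$ of p-divisors with the same slices and markings, hence a different invariant affine covering of the same $X$. Marked maximal tail cones contribute charts that are affine spaces by smoothness (Lemma~\ref{sec:lemma-smooth-complete} plus Remark~\ref{rem:downgrade}); unmarked maximal cones are covered by two new p-divisors with an empty coefficient at $0$ resp.\ $\infty$; and for maximal polyhedra with non-maximal tail the existence of the needed replacement charts rests on Proposition~\ref{sec:prop-non-maximal}, whose proof is the technical heart (a unimodularity argument on the cones of Lemma~\ref{sec:lemma-cone-rays} showing that, up to one exceptional point, every slice contains a lattice translate of the tail). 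Without an argument of this kind---i.e.\ without either classifying which smooth charts fail to be $\AA^n$ and re-covering them, or proving the lattice-translate statement---your outline does not yield the theorem.
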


Due to \cite{ah06,ahs,tvars} $T$-varieties can be described and studied in the language of polyhedral divisors. Here, we restrict ourself to the case of
rational T-varieties of complexity one. It means that the divisors live on $\PP^1$.
This allows us to simplify some of the definitions.

\subsection*{The affine case}
We consider a lattice $M$ of rank $n$, the dual lattice $N=\text{Hom}(M,\ZZ)$, and
the vector space $N_\QQ = N \otimes_\ZZ \QQ$. Let $T=N \otimes_\ZZ \KK^*$
be the algebraic torus of dimension $n$ with character lattice $M$.

Every polyhedron $\Delta \subset N_\QQ$ has a Minkowski decomposition $\Delta=P + \sigma$,
where $P$ is a (compact) polytope and $\sigma$ is a polyhedral cone.
We call $\sigma$ the \emph{tail cone} of $\Delta$ and denote it by $\tail(\Delta)$. A \emph{polyhedral divisor} on $\PP^1$ over $N$ is a formal sum
\[
\D = \sum_{y \in \PP^1} \D_y \cdot y,
\]
where $\D_y$ are polyhedra with common pointed tail cone $\sigma$ and
only finitely many coefficients differ from $\sigma$ itself.
Note that we allow empty coefficients.

We call $\D$ a \emph{proper} polyhedral divisor or a \emph{p-divisor} for short, if
\begin{equation}
  \label{eq:properness}
  \deg \D := \sum_{y\in \PP^1} \D_y \subsetneq \sigma.
\end{equation}
Here $\deg \D =\emptyset$ if and only if $D_y=\emptyset$ for some $y\in \PP^1$.

By \cite[Theorems~3.1,~3.4]{ah06} there is a functor $X$ associating to a p-divisor $\D$ on $\PP^1$ a rational complexity-one T-variety $X(\D)$ of dimension $n+1$, and every such variety arises this way.

\begin{remark}{\cite[Remark 1.8.]{iv}}
\label{rem:downgrade}
Let us fix two points $y_0,y_\infty \in \PP^1$. For $y \in \PP^1 \setminus \{y_0,y_\infty\}$
we consider lattice points $v_y \in N$ such that only finitely many of them are
different from $0$. We denote the sum $\sum_{y\neq y_0,y_\infty} v_y$ by $v$
and choose $w_0, w_\infty \in N$ with $w_0+w_\infty=v$.

A polyhedral divisor $\D$ of the form
  \begin{equation}
    \label{eq:1}
    \D_0 \cdot y_0 \;+\; \D_\infty \cdot y_\infty \;+\; \sum_y (v_y + \sigma) \cdot y 
  \end{equation}
on $\PP^1$ corresponds to the affine toric variety of the cone
$$\cone(w_0\!+\!\D_0,w_\infty\! +\! \D_\infty) \;:=\;
\QQ_{\geq 0} \cdot \big((w_0\!+\!\D_0) \!\times\! \{1\} \,\cup\, \sigma \!\times\! \{0\} \,
\cup\, (w_\infty\! +\! \D_\infty) \!\times \!\{-1\}\big) \;\subset\;N_\QQ \oplus \QQ$$
together with the subtorus action given by the lattice embedding
$N \hookrightarrow N \oplus \ZZ$. Here, we allow
$\D_0=\emptyset$ or $\D_\infty=\emptyset$.
Different choices of $w_0$ and $w_\infty$ lead to cones which can be transformed
into each other by a lattice automorphism of $N \times \ZZ$. Hence, the corresponding toric
varieties are isomorphic and the above statement makes indeed sense.
If the affine toric variety is assumed to be smooth, the cone has to be regular.
In this case, if $\D_0$ or $\D_\infty$ has dimension $n$, then the constructed cone has dimension $n+1$ and the variety $X(\D)$ is an affine space.
\end{remark}

It is not hard to exhibit the extremal rays of the cone constructed in Remark~\ref{rem:downgrade}.
\begin{lemma}
\label{sec:lemma-cone-rays}
There are three types of extremal rays in $C:=\cone(w_0+\D_0,w_\infty + \D_\infty)$:
\begin{enumerate}
\item $\rho \times \{0\}$ for every $\rho \in \sigma(1)$, where
$\deg \D \cap \rho = (w_0 + w_\infty + \D_0 + \D_\infty) \cap \rho = \emptyset$;
\item $\QQ_{\geq 0} \cdot (w_0+v, 1)$, where $v \in \D_0$ is a vertex;
\item $\QQ_{\geq 0} \cdot (w_\infty+v, -1)$, where $v \in \D_\infty$ is a vertex.
\end{enumerate}
\end{lemma}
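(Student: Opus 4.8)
The plan is to read off the extremal rays of $C:=\cone(w_0+\D_0,w_\infty+\D_\infty)$ from its presentation in Remark~\ref{rem:downgrade} as the cone generated by the three ``floors'' $(w_0+\D_0)\times\{1\}$, $\sigma\times\{0\}$, $(w_\infty+\D_\infty)\times\{-1\}$, by exhibiting for each claimed ray a supporting linear functional of $N_\QQ\oplus\QQ$ that cuts it out. I would begin with two reductions. First, since $\deg\D=w_0+w_\infty+\D_0+\D_\infty\subseteq\sigma$ by the properness condition \eqref{eq:properness}, combining a top-floor point and a bottom-floor point with equal weight $s\ge 0$ produces a point of $s\cdot\deg\D+\sigma\subseteq\sigma$ at height $0$; hence $C\cap(N_\QQ\times\{0\})=\sigma\times\{0\}$, while similarly $C\cap(N_\QQ\times\{1\})=(w_0+\D_0)\times\{1\}$ and $C\cap(N_\QQ\times\{-1\})=(w_\infty+\D_\infty)\times\{-1\}$. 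Second, since a non-vertex point of any of these slices is a convex combination of its vertices plus a tail-cone direction, every extremal ray of $C$, having constant sign of the height, is spanned by a ray of $\sigma$ at height $0$, by some $(w_0+v,1)$ with $v$ a vertex of $\D_0$, or by some $(w_\infty+v,-1)$ with $v$ a vertex of $\D_\infty$.

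It remains to decide which candidates are genuinely extremal, and for that I would test the functional $\varphi_{m,c}(u,h)=\langle m,u\rangle+ch$. It is non-negative on $C$ iff $m\in\sigma^\vee$ and $-\langle m,w_0\rangle-\mu_0\le c\le\langle m,w_\infty\rangle+\mu_\infty$, where $\mu_0=\min_{u\in\D_0}\langle m,u\rangle$ and $\mu_\infty=\min_{u\in\D_\infty}\langle m,u\rangle$ (finite exactly because $m\in\sigma^\vee$); such a $c$ exists because $\langle m,w_0+w_\infty\rangle+\mu_0+\mu_\infty=\min_{u\in\deg\D}\langle m,u\rangle\ge 0$. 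For a vertex $v$ of $\D_0$ I would take $m$ in the relative interior of the normal cone $\{m:\langle m,v\rangle=\mu_0\}$; this is a maximal cone of the normal fan of $\D_0$, a complete fan with support $\sigma^\vee$, so $m\in\mathrm{int}\,\sigma^\vee$ and $\langle m,\cdot\rangle$ is minimized on $\D_0$ only at $v$. Setting $c=-\langle m,w_0+v\rangle$, the functional vanishes at $(w_0+v,1)$, is strictly positive on the rest of the top floor and on $(\sigma\times\{0\})\setminus\{0\}$, and on $(w_\infty+p,-1)$ equals $\langle m,\,w_0+w_\infty+v+p\rangle\ge\min_{u\in\deg\D}\langle m,u\rangle\ge 0$; hence $\QQ_{\ge 0}(w_0+v,1)$ is extremal, and the vertices of $\D_\infty$ are handled symmetrically.

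For $\rho\in\sigma(1)$ I would pick $m$ in the relative interior of the face $\sigma^\vee\cap\rho^\perp$ of $\sigma^\vee$, so that $\langle m,\cdot\rangle$ vanishes on $\sigma$ exactly along $\rho$; a scalar $c$ making $\varphi_{m,c}$ strictly positive on the two floors at heights $\pm1$ exists iff $\min_{u\in\deg\D}\langle m,u\rangle>0$, which, since $\deg\D\subseteq\sigma$, is equivalent to $\deg\D\cap\rho=\emptyset$. Conversely, if $z\in\deg\D\cap\rho$ is nonzero, then writing $z=w_0+w_\infty+a+b$ with $a\in\D_0$, $b\in\D_\infty$ gives $(z,0)=(w_0+a,1)+(w_\infty+b,-1)$, a conic combination of two non-proportional elements of $C$, so $\rho\times\{0\}$ is not extremal. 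Assembling the three families proves the statement.

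I expect the main obstacle to be the verification that the functional attached to a vertex of $\D_0$, which is built only from $\D_0$ and $\sigma$, is still non-negative on the opposite floor $(w_\infty+\D_\infty)\times\{-1\}$: this is precisely where properness $\deg\D\subsetneq\sigma$ enters, and it is also what forces the cross-term $w_0+w_\infty+\D_0+\D_\infty$ to govern the rays of type~(1). A minor technical point is the claim that the relative interior of a vertex normal cone of $\D_0$ lies in $\mathrm{int}\,\sigma^\vee$, which holds because the normal fan of $\D_0$ subdivides $\sigma^\vee$ completely; the borderline cases where $\D_0$ or $\D_\infty$ is empty, or where $0\in\deg\D$ (so $C$ fails to be pointed), are covered by the same formulas.
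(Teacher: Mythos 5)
The paper itself offers no proof of this lemma: it is introduced with the sentence ``It is not hard to exhibit the extremal rays\dots'' and then simply stated, so the only question is whether your argument is correct. Your route --- first computing the slices $C\cap(N_\QQ\times\{h\})$ for $h=1,0,-1$ (which uses $\deg\D\subseteq\sigma$ and gives exactly $(w_0+\D_0)\times\{1\}$, $\sigma\times\{0\}$, $(w_\infty+\D_\infty)\times\{-1\}$), then reducing the candidate rays to vertices and tail rays, and finally testing each candidate with a supporting functional $\varphi_{m,c}$ --- is the natural argument and is essentially complete. In particular the equivalence for type (1), including the converse decomposition $(z,0)=(w_0+a,1)+(w_\infty+b,-1)$ when $z\in\deg\D\cap\rho$, is handled correctly (note only that a nonzero such $z$ exists whenever the intersection is nonempty, since $\rho\subseteq\tail(\deg\D)$).

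One step needs tightening. In the type-(2) argument you verify only $\varphi\ge 0$ on the opposite floor, and ``hence $\QQ_{\ge 0}(w_0+v,1)$ is extremal'' does not yet follow: if $\varphi$ vanished at some $(w_\infty+b,-1)$, the zero face of $\varphi$ would contain that point too, and since $(w_0+v,1)+(w_\infty+b,-1)=(w_0+w_\infty+v+b,\,0)$ would then lie in $(\sigma\cap m^\perp)\times\{0\}=\{0\}$, the cone $C$ would contain a line and the candidate ray would not be extremal. The missing observation is that such vanishing forces $w_0+w_\infty+v+b=0$, i.e.\ $0\in\deg\D$, whence $\sigma=0+\sigma\subseteq\deg\D$, contradicting the strict inclusion $\deg\D\subsetneq\sigma$ in the properness condition (\ref{eq:properness}). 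So $\varphi$ is in fact strictly positive on the opposite floor and the zero face is exactly the candidate ray. This also corrects your closing remark: the case $0\in\deg\D$ is not ``covered by the same formulas'' --- in that case $C$ is not pointed and the rays of types (2) and (3) genuinely fail to be extremal --- rather, properness excludes it, which is exactly why the lemma holds as stated. The cases $\D_0=\emptyset$ or $\D_\infty=\emptyset$ are indeed harmless, as you say. A cosmetic point: the normal fan of $\D_0$ is not complete; its support is $\sigma^\vee$, which is all you need, since $\sigma$ pointed makes $\sigma^\vee$ full-dimensional, so the vertex normal cones are full-dimensional and their interiors lie in the interior of $\sigma^\vee$.
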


\begin{proposition}{\cite[Proposition 3.1 and Theorem 3.3.]{h08}}
\label{sec:lemma-smooth}
  Let $\D$ be a p-divisor on $\PP^1$. Then $X(\D)$ is smooth if and only if
  \begin{enumerate}
  \item either $\deg \D \neq \emptyset$, $\D$ is of the form (\ref{eq:1}), and
  the cone $C$ is regular, or
  \item $\deg \D = \emptyset$ and $\cone(\D_y):=\cone(\D_y,\emptyset)$ is regular
  for every $y \in \PP^1$.
  \end{enumerate}
\end{proposition}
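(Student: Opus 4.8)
The plan is to use that smoothness is a local and $T$-invariant property, so that it suffices to test it along the closed $T$-orbits of $X(\D)$ and to produce, around each such orbit, an affine toric local model whose defining cone is read off from $\D$. Throughout I would work with the graded ring $\KK[X(\D)] = \bigoplus_{m\in\sigma^\vee\cap M}\Gamma(\PP^1,\Of(\lfloor\D(m)\rfloor))\,\chi^m$ underlying the construction of \cite{ah06}, where $\D(m)=\sum_y \bigl(\min_{v\in\D_y}\langle m,v\rangle\bigr)\cdot y$, and split into the two cases distinguished by $\deg\D$. The governing geometric fact, which I would take from the structure theory of \cite{ah06,tvars}, is that $\deg\D=\emptyset$ forces every closed $T$-orbit to lie in a single fibre of the rational quotient $X(\D)\dashrightarrow\PP^1$, whereas $\deg\D\neq\emptyset$ produces a contracting elliptic $T$-fixed point; the two alternatives of the statement match these two situations.

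In the case $\deg\D=\emptyset$ I would fix a point $y\in\PP^1$, use a local coordinate on $\PP^1$ at $y$ together with the existence of an empty coefficient to trivialise the base direction, and thereby exhibit a $T$-invariant affine open subset containing the fibre over $y$ that is $T$-equivariantly isomorphic to the affine toric variety of $\cone(\D_y)=\cone(\D_y,\emptyset)$. Since every closed orbit lies in some such fibre, and since for all but finitely many $y$ one has $\D_y=\sigma$, for which $\cone(\D_y)$ is regular exactly when $\sigma$ is, the classical toric criterion (a toric variety is smooth iff its cone is regular) yields that $X(\D)$ is smooth iff $\cone(\D_y)$ is regular for every $y$. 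The extremal-ray list of Lemma~\ref{sec:lemma-cone-rays} provides the bookkeeping needed to make these cones explicit.

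In the case $\deg\D\neq\emptyset$ one direction is immediate: if $\D$ is of the form~(\ref{eq:1}), then Remark~\ref{rem:downgrade} identifies $X(\D)$ with the affine toric variety of $C=\cone(w_0+\D_0,w_\infty+\D_\infty)$, so by the toric criterion $X(\D)$ is smooth precisely when $C$ is regular. The substantive direction is that smoothness forces $\D$ into the form~(\ref{eq:1}). Here I would exploit the contracting fixed point: choosing a one-parameter subgroup $\lambda$ in the relative interior of $\sigma$ equips $\KK[X(\D)]$ with a non-negative $\ZZ$-grading whose degree-zero part is $\Gamma(\PP^1,\Of)=\KK$. If $X(\D)$ is smooth, then $R:=\KK[X(\D)]$ is a finitely generated, positively graded, regular $\KK$-algebra with $R_0=\KK$; graded Nakayama shows $R$ is minimally generated by $\dim_\KK(R_+/R_+^2)$ homogeneous elements, where $R_+=\bigoplus_{d>0}R_d$, and by regularity at the vertex this number equals $\dim X(\D)=n+1$, so the natural surjection from a polynomial ring in $n+1$ variables is an isomorphism of $(n+1)$-dimensional domains. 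Thus $X(\D)\cong\AA^{n+1}$ is toric, and invoking the combinatorial characterisation of toric complexity-one $T$-varieties as downgrades identifies $\D$ with a divisor of the form~(\ref{eq:1}) whose cone $C$ is then forced to be regular.

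The main obstacle is precisely this last implication in the case $\deg\D\neq\emptyset$: a priori $\D$ could carry three or more genuinely polyhedral coefficients, and one must rule this out rather than merely computing a cone. The graded-polynomial-ring argument is the engine, but it presupposes the contracting structure, i.e.\ that the weight cone is pointed; when $\sigma$ is not full-dimensional $X(\D)$ has positive-dimensional closed orbits and one must first split off the subtorus fixing them and reduce to the pointed situation. Carrying out this reduction cleanly, and then verifying that the resulting toric description indeed coincides with a downgrade of the shape~(\ref{eq:1}) rather than some unrelated presentation, is the delicate point. A secondary technical step is the explicit construction of the toric chart with cone $\cone(\D_y)$ in the case $\deg\D=\emptyset$, which requires compatible local coordinates on $\PP^1$ and careful handling of the empty coefficient.
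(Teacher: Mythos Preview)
The paper does not prove this proposition at all: it is quoted as \cite[Proposition~3.1 and Theorem~3.3]{h08} and used as a black box, with no argument given. So there is no in-paper proof to compare against; you are supplying what the authors chose to import.

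Your outline is reasonable and tracks the natural dichotomy. For $\deg\D=\emptyset$ the plan of producing, over each $y$, a toric chart with cone $\cone(\D_y)$ is indeed how the cited reference proceeds (the contraction $\tilde X(\D)\to X(\D)$ is an isomorphism in this case, and $\tilde X(\D)$ is locally toric with these cones). For $\deg\D\neq\emptyset$ with $\sigma$ full-dimensional, your graded-Nakayama argument is correct and, if you choose the minimal generators $M$-homogeneous rather than merely $\ZZ$-homogeneous, it even yields $X(\D)\cong\AA^{n+1}$ with a diagonal $T$-action.

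There is, however, a genuine gap in the last step of the $\deg\D\neq\emptyset$ case. Showing that $X(\D)$ is $T$-equivariantly an affine space with linear $T$-action tells you that \emph{some} p-divisor presenting it arises as a toric downgrade of shape~(\ref{eq:1}); it does not by itself force \emph{your given} $\D$ to have that shape. The proposition asserts a property of $\D$ (``$\D$ is of the form~(\ref{eq:1})'', i.e.\ at most two coefficients fail to be lattice translates of $\sigma$), not merely of $X(\D)$. To close this you need either a uniqueness statement for complexity-one p-divisors (that $\D$ is determined by the $T$-variety up to lattice translations of coefficients and automorphisms of the base), or a direct fibrewise argument reading the isotropy data over each $y\in\PP^1$ from the diagonal $T$-action and concluding that the non-lattice vertices can occur over at most two points. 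Either route is available in the literature around \cite{ah06,tvars}, but ``invoking the combinatorial characterisation of toric complexity-one $T$-varieties as downgrades'' is precisely where the content lies, and you have not supplied it. The reduction for non-full-dimensional $\sigma$, which you correctly flag, is a second and more routine issue.
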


Polyhedral divisors of the second type do not necessarily correspond to affine spaces.
This is only the case if at most two coefficients are not lattice translates of the tail cone, see  Remark~\ref{rem:downgrade}.

As a consequence of Lemma~\ref{sec:lemma-cone-rays} and Proposition~\ref{sec:lemma-smooth}
we easily obtain that for two special cases all coefficients of $\D$ have to be translated
cones in order to obtain a smooth affine variety.
\begin{corollary}
\label{sec:cor-facet-rays}
Assume that $X(\D)$ is smooth. If $\D$ has a tail cone $\sigma$ of maximal dimension and
$\deg \D \cap \tau = \emptyset$ for some facet $\tau \prec \sigma$, then all the coefficients
are translates of $\sigma$ and all but two are even lattice translates.
\end{corollary}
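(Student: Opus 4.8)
The plan is to convert smoothness of $X(\D)$ into a count of extremal rays and to confront it with a dimension count. By Proposition~\ref{sec:lemma-smooth} there are two cases, according to whether $\deg\D$ is empty or not, and in both the relevant cone is regular, hence simplicial, hence has exactly as many extremal rays as its dimension.

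First I would treat $\deg\D\neq\emptyset$, the substantial case. Here $\D$ has the shape (\ref{eq:1}) and the cone $C=\cone(w_0+\D_0,w_\infty+\D_\infty)\subset N_\QQ\oplus\QQ$ is regular. Since $\sigma$ is full-dimensional in $N_\QQ$ and $\deg\D\neq\emptyset$ forces $\D_0$ and $\D_\infty$ to be non-empty, $C$ contains both the $n$-dimensional slice $\sigma\times\{0\}$ and points at heights $\pm1$, so $\dim C=n+1$ and $C$ has exactly $n+1$ extremal rays. Now I would read these off with Lemma~\ref{sec:lemma-cone-rays}. The facet $\tau\prec\sigma$ is a pointed cone of dimension $n-1$, so it contains at least $n-1$ of the rays of $\sigma$, and each such ray $\rho$ satisfies $\deg\D\cap\rho\subseteq\deg\D\cap\tau=\emptyset$; thus $\tau$ contributes at least $n-1$ rays of $C$ of the first type. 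Non-emptiness of $\D_0$ and $\D_\infty$ yields at least one ray of the second type and at least one of the third. As rays of the first type span lines in $N_\QQ\times\{0\}$ while those of the second and third types have positive, respectively negative, last coordinate, and distinct vertices of $\D_0$ (resp.\ $\D_\infty$) give distinct rays, we get $n+1=\#C(1)\ge(n-1)+1+1=n+1$. Hence every inequality is an equality: $\D_0$ and $\D_\infty$ have a unique vertex each, so $\D_0=p_0+\sigma$ and $\D_\infty=p_\infty+\sigma$, while the remaining coefficients are the lattice translates $v_y+\sigma$ appearing in (\ref{eq:1}). This is exactly the assertion in this case.

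For $\deg\D=\emptyset$ the facet hypothesis is automatic, and I would run the same bookkeeping on the cones $\cone(\D_y):=\cone(\D_y,\emptyset)$ for the $y$ with $\D_y\neq\emptyset$: each is regular and $(n+1)$-dimensional, $\sigma\times\{0\}$ is the face cut out by the last coordinate and hence contributes the $\ge n$ rays of $\sigma$, the vertices of $\D_y$ contribute the remaining rays, and comparing with $n+1$ forces $\sigma$ to have exactly $n$ rays — hence, as a face of a regular cone, to be regular — and $\D_y$ to have a single vertex $v$, so $\D_y=v+\sigma$; regularity of $\cone(\D_y)$ then forces the primitive generator through $(v,1)$ to complete a $\ZZ$-basis of $N\times\ZZ$ together with the rays of $\sigma$, which forces $v\in N$. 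So in this case every non-empty coefficient is even a lattice translate of $\sigma$, and the coefficients that $\deg\D=\emptyset$ forces to be empty are the exceptional ones. The routine parts (full-dimensionality of the cones, distinctness of the listed rays, the final integrality step) I would dispose of quickly; the single step that actually carries the proof is the observation that a facet of the full-dimensional tail cone already accounts for $n-1$ of the $n+1$ extremal rays of $C$, after which simpliciality leaves room for only one vertex in each of $\D_0$ and $\D_\infty$.
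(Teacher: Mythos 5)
Your argument is correct and follows exactly the route the paper intends: the paper offers no written proof, stating the corollary as an easy consequence of Lemma~\ref{sec:lemma-cone-rays} and Proposition~\ref{sec:lemma-smooth}, and your ray count (the facet $\tau$ forces at least $n-1$ height-zero extremal rays of the regular $(n+1)$-dimensional cone $C$, leaving room for only one vertex in each of $\D_0$ and $\D_\infty$) is precisely that consequence, with the "all but two are lattice translates" part already built into the form~(\ref{eq:1}) supplied by Proposition~\ref{sec:lemma-smooth}. Your side treatment of the case $\deg\D=\emptyset$ is also fine (it essentially reproves Corollary~\ref{sec:cor-smooth}), though only the case $\deg\D\neq\emptyset$ is used later in the paper.
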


\begin{corollary}
\label{sec:cor-smooth}
If $\deg \D = \emptyset$ and $X(\D)$ is smooth, then the tail cone $\sigma$ has to be regular.
Moreover, if $\sigma$ is maximal, then  $\D_y$ is either empty or a lattice translate of
$\sigma$ for every $y \in \PP^1$.
\end{corollary}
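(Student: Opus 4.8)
\emph{Proof sketch.} We are in the situation of Proposition~\ref{sec:lemma-smooth}\,(2): since $\deg\D=\emptyset$, smoothness of $X(\D)$ means that
$\cone(\D_y):=\cone(\D_y,\emptyset)=\QQ_{\geq 0}\bigl((\D_y\times\{1\})\cup(\sigma\times\{0\})\bigr)$
is a regular cone in $N\oplus\ZZ$ for every $y\in\PP^1$. To see that $\sigma$ itself is regular, pick any $y$ with $\D_y\neq\emptyset$ (there are cofinitely many, namely those with $\D_y=\sigma$). All elements of $\cone(\D_y)$ have nonnegative height, so the hyperplane $\{\text{height}=0\}$ supports this cone, and the corresponding face is exactly $\sigma\times\{0\}$. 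A face of a regular cone is regular, so $\sigma\times\{0\}$ is regular in $N\oplus\ZZ$; projecting off the last coordinate shows that the primitive ray generators of $\sigma$ extend to a $\ZZ$-basis of $N$, i.e.\ $\sigma$ is regular.

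Now assume in addition that $\sigma$ is maximal, $\dim\sigma=n$, and fix $y$ with $\D_y\neq\emptyset$; we must produce $v_y\in N$ with $\D_y=v_y+\sigma$. Put $C:=\cone(\D_y)$. Since $\sigma\times\{0\}$ is $n$-dimensional and $\D_y\times\{1\}$ lies off the hyperplane $\{\text{height}=0\}$, the cone $C$ is $(n{+}1)$-dimensional; it is pointed, because a line through the origin inside $C$ would have to lie in the height-$0$ face $\sigma\times\{0\}$, contradicting that tail cones are pointed. Being full-dimensional, pointed and regular, $C$ is simplicial with exactly $n{+}1$ rays. The face $\sigma\times\{0\}$ contributes the $n$ rays $\rho_i\times\{0\}$ with $\rho_i\in\sigma(1)$ (there are exactly $n$ of them, as $\sigma$ is now known regular of full dimension), so $C$ has precisely one further ray, necessarily of positive height; write its primitive generator as $(u,h)$ with $u\in N$ and $h\in\ZZ_{>0}$. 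Since $\{\rho_i\}$ is a $\ZZ$-basis of $N$, the determinant of the $n{+}1$ primitive generators of $C$ equals $\pm h$, and regularity of $C$ forces $h=1$.

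It remains to compute the height-$1$ slice of $C$ in two ways. Using the generator set $\{(\rho_i,0)\}\cup\{(u,1)\}$ of the simplicial cone $C$, this slice equals $(u+\sigma)\times\{1\}$. On the other hand, any point of $C$ of height $1$ is, after collecting terms, of the form $(p+s)\times\{1\}$ with $p\in\D_y$ and $s\in\sigma$, hence lies in $(\D_y+\sigma)\times\{1\}=\D_y\times\{1\}$ (recall $\D_y=P_y+\sigma$, so $\D_y+\sigma=\D_y$); and conversely $\D_y\times\{1\}\subseteq C$. Therefore $\D_y=u+\sigma$ with $u\in N$, as claimed. The only point needing care is the passage from ``regular as a cone in $N\oplus\ZZ$'' to the integral statements ``$\sigma$ regular in $N$'' and ``$u\in N$'', which is handled by the face-of-a-regular-cone remark in the first paragraph and by the determinant computation in the second; the rest is soft convex geometry. (Alternatively, the unique vertex $u$ of $\D_y$ can be read off directly from the ray classification in Lemma~\ref{sec:lemma-cone-rays}.) \qed
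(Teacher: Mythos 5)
Your proof is correct and follows essentially the same route the paper intends for this corollary (which it states without proof as a consequence of Lemma~\ref{sec:lemma-cone-rays} and Proposition~\ref{sec:lemma-smooth}): invoke Proposition~\ref{sec:lemma-smooth}(2), note that $\sigma\times\{0\}$ is a face of the regular cone $\cone(\D_y)$ and hence regular, and for maximal $\sigma$ use the ray structure of $\cone(\D_y)$ together with regularity to force a unique vertex lying in $N$. Your height-one-slice and determinant computation simply makes explicit what the paper leaves to the reader.
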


\subsection*{The complete case and affine coverings}
Consider two p-divisors $\D$ and $\D'$ on $\PP^1$ such that $\D_y'$ is a face of $\D_y$ for every $y \in \PP^1$ and $\deg \D' = \deg \D \cap \tail \D'$. Then by \cite[Proposition~1.1]{is}
we obtain an open embedding $X(\D') \hookrightarrow X(\D)$. For two p-divisors $\D$, $\D'$ we
define their \emph{intersection} by $\D\cap \D':= \sum_y (\D_y \cap \D'_y) \cdot y.$

For a given complete T-variety we consider an open covering by affine torus invariant subsets $X_i$, $i=1,\ldots,m$, and let $X_{ij}=X_i \cap X_j$. Every such subset corresponds to a polyhedral divisor $\D^i$ or $\D^{ij}$, respectively. We obtain a finite set $\fan = \{\D^1, \ldots, \D^m\}$. By \cite[Theorem 5.6]{ahs} and \cite[Remark 7.4(iv)]{ahs} we may assume that $\D^{ij}=\D^i \cap \D^j$ holds and the set $\fan$ satisfies the following compatibility conditions.
\begin{description}
\item[Slice rule] The \emph{slices} $\fan_y=\{\D_y \mid \D \in \fan\}$ are complete polyhedral
subdivisions of $N_\QQ$, i.e. they cover $N_\QQ$ and the intersection of every two polyhedra is a face of both of them.
\item[Degree rule] For $\tau = (\tail \D) \cap (\tail \D')$ one has
$\tau \cap (\deg \D)= \tau \cap (\deg \D').$
\end{description}
Note that $\tail \fan := \{\tail \D \mid \D \in \fan\}$ generates a fan and all but finitely
many slices $\fan_y$ just equal $\tail \fan$. Consider a maximal tail cone $\sigma$ in
$\tail \fan$. Then for every $y$ there is a unique polyhedron $\fan_y(\sigma)$ in $\fan_y$
having this tail.

A maximal cone $\sigma \in \tail \fan$ is called \emph{marked} if the corresponding polyhedral
divisor $\D$ with $\sigma = \tail \D$ fulfills $\deg \D \neq \emptyset$. We denote the set
of all marked cones by $\tail^m(\fan) \subset \tail(\fan)$.

In general, there are many torus invariant affine coverings of $X$. But by
\cite[Proposition~1.6]{is} every rational complete $T$-variety of complexity one is uniquely
determined by the slices $\fan_y$ and the markings in $\tail \fan$. Hence, another set $\fan'$
of p-divisors with $\fan_y=\fan'_y$ for all $y \in \PP^1$ and $\tail^m(\fan)=\tail^m(\fan')$
corresponds to another invariant affine covering of the same variety.
From now on we assume
that $X$ is  a rational complete \emph{smooth} T-variety of complexity one and we consider
an affine covering given by the p-divisors in $\fan$. By Proposition~\ref{sec:lemma-smooth}, we have
\begin{lemma}
\label{sec:lemma-smooth-complete}
Given a maximal cone $\sigma$ in $\tail \fan$, there are two possible cases:
  \begin{enumerate}
  \item $\sigma$ is marked and all but two coefficients of $\fan_y(\sigma)$
  are lattice translates of $\sigma$, or
  \item $\sigma$ is not marked; then it has to be regular and $\fan_y(\sigma)$ has to be
  a lattice translate of $\sigma$ for every $y \in \PP^1$.
  \end{enumerate}
\end{lemma}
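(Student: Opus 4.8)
The plan is to read off both alternatives directly from the smoothness criterion of Proposition~\ref{sec:lemma-smooth}, applied to the affine charts of the covering, and to invoke the slice rule only to discard the one remaining degenerate possibility. So fix a maximal cone $\sigma\in\tail\fan$ and let $\D$ be the p-divisor with $\tail\D=\sigma$ and coefficients $\D_y=\fan_y(\sigma)$; the corresponding invariant affine open subset $X(\D)\subseteq X$ is smooth since $X$ is, so exactly one of the two mutually exclusive cases of Proposition~\ref{sec:lemma-smooth} holds. These cases are distinguished by whether $\deg\D=\emptyset$, that is, precisely by whether $\sigma$ is marked.

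First I would treat the marked case, where $\deg\D\neq\emptyset$. Since $X(\D)$ is smooth and $\deg\D\neq\emptyset$, the second alternative of Proposition~\ref{sec:lemma-smooth} is excluded and we land in the first one: $\D$ has the shape~(\ref{eq:1}). Comparing coefficients with~(\ref{eq:1}), every coefficient other than the two distinguished ones $\D_{y_0}$ and $\D_{y_\infty}$ equals $v_y+\sigma$, i.e.\ is a lattice translate of $\sigma$. This is exactly assertion~(1), and nothing further is required.

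Next I would treat the unmarked case, $\deg\D=\emptyset$. Here the hypotheses of Corollary~\ref{sec:cor-smooth} are met, which immediately yields that $\sigma$ is regular and, as $\sigma$ is maximal, that each coefficient $\fan_y(\sigma)$ is either empty or a lattice translate of $\sigma$. To upgrade this to the stronger assertion~(2) I must rule out empty coefficients, and this is where the slice rule enters: the slices $\fan_y$ are complete polyhedral subdivisions of $N_\QQ$, so the maximal cone $\sigma$ is the tail of a unique non-empty cell of $\fan_y$ for every $y$. Realizing this cell as the corresponding coefficient of a smooth affine chart of the covering having tail $\sigma$ and applying Corollary~\ref{sec:cor-smooth} there, it is forced to be a lattice translate of $\sigma$. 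Hence $\fan_y(\sigma)$ is a lattice translate of $\sigma$ for all $y$, giving assertion~(2).

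I expect the only genuine subtlety to be this last non-emptiness point, namely matching the \emph{empty or lattice translate} dichotomy that Corollary~\ref{sec:cor-smooth} supplies for a single p-divisor against the cell $\fan_y(\sigma)$ of the global subdivision. The resolution is straightforward bookkeeping with the covering: every non-empty cell with tail $\sigma$ occurring in some slice is the coefficient of some smooth affine chart with that tail, to which the corollary applies, while the slice rule guarantees that such a cell is present in each slice. Everything else is a direct transcription of Proposition~\ref{sec:lemma-smooth} and Corollary~\ref{sec:cor-smooth} into the language of the divisorial fan.
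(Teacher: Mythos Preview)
Your argument is correct and matches the paper's, which simply records ``By Proposition~\ref{sec:lemma-smooth}'' immediately before the lemma; you have unpacked that citation via Corollary~\ref{sec:cor-smooth} and made the slice-rule step explicit.

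One small caveat worth cleaning up: in your opening you set $\D_y=\fan_y(\sigma)$ for \emph{every} $y$ and assert that $X(\D)\subseteq X$ is smooth, then say the two cases of Proposition~\ref{sec:lemma-smooth} are distinguished by whether $\deg\D=\emptyset$. But with all coefficients $\fan_y(\sigma)$ nonempty one always has $\deg\D\neq\emptyset$; in the unmarked situation this $\D$ need not be a proper p-divisor at all (the properness condition $\deg\D\subsetneq\sigma$ may fail), nor need it lie in $\fan$, so smoothness of $X$ says nothing about it directly. You implicitly recognize this and repair it in your final paragraph by passing, for each fixed $y$, to an actual chart $\D'\in\fan$ with $\D'_y=\fan_y(\sigma)$; since $\sigma$ is unmarked this $\D'$ has $\deg\D'=\emptyset$, and Corollary~\ref{sec:cor-smooth} then applies legitimately. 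With that adjustment the proof is complete and is exactly what the paper intends.
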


In the slices $\fan_y$ there might occur maximal polyhedra with non-maximal tail cones.
Here, Lemma~\ref{sec:lemma-smooth-complete} does not apply. Instead we need the following crucial fact.

\begin{proposition} \label{sec:prop-non-maximal}
Let $P$ be a maximal polyhedron with non-maximal tail in $\fan_z$ for some $z \in \PP^1$. Then up to one exception $z' \in \PP^1$ there is a lattice translate of $\tail(P)$ in $\fan_y$, for every $y \neq z$.
\end{proposition}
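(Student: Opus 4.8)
The plan is to reduce the statement to the affine (local) situation already analysed in the previous subsection, and then to exploit the smoothness of $X$ together with the combinatorics of polyhedral subdivisions on $\PP^1$. Let $P$ be a maximal polyhedron of the slice $\fan_z$ whose tail cone $\tau := \tail(P)$ is not maximal in $\tail\fan$. Since $P$ is maximal in the complete polyhedral subdivision $\fan_z$, there is a p-divisor $\D \in \fan$ with $\D_z = P$; the corresponding affine chart $X(\D)$ is smooth, and because $\tau = \tail\D$ is not maximal, the tail cone $\sigma$ of $\fan$ properly contains $\tau$ in some direction, so $\tau$ lies in a facet of a maximal tail cone. I would first observe that since $P$ is maximal with non-maximal tail, the degree $\deg\D$ cannot be contained in the interior of $\tau$: either $\deg\D = \emptyset$, or $\D$ is of the special form~\eqref{eq:1} and $\deg\D$ meets the relative boundary of $\tau$. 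In either case Corollary~\ref{sec:cor-facet-rays} or Corollary~\ref{sec:cor-smooth} applies to the affine chart $X(\D)$ and tells us that \emph{all} coefficients $\D_y$ are translates of $\tau$, and all but at most two are \emph{lattice} translates.

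The next step is to transfer this conclusion from $\D$ to the slices $\fan_y$. For a point $y \neq z$ at which $\D_y$ is a lattice translate of $\tau$, the polyhedron $\D_y$ is a cell of the complete subdivision $\fan_y$, so $\fan_y$ contains a lattice translate of $\tau$, which is exactly what is claimed. Thus the only points $y$ that could fail to contain a lattice translate of $\tau$ in $\fan_y$ are: first, the at most two exceptional points of $\D$ where $\D_y$ is only a (non-lattice) translate of $\tau$; and second, possibly $z$ itself. I would handle the exceptional points by a more careful bookkeeping: when $\deg\D\ne\emptyset$ and $\D$ has the form~\eqref{eq:1}, only $\D_{y_0}$ and $\D_{y_\infty}$ can be non-lattice translates, but one of these two points is allowed to be the exceptional point $z'$ in the statement; the other must then actually contain a lattice translate of $\tau$ in its slice, because the corresponding coefficient, being a translate of $\tau$ of the correct dimension, must itself be a cell of $\fan_y$ adjacent to a lattice-translate cell, or — and this is where I would use the completeness and the smoothness of the neighbouring maximal cones via Lemma~\ref{sec:lemma-smooth-complete} — a lattice translate of $\tau$ appears as a face of the maximal polyhedron $\fan_y(\sigma)$ for a maximal $\sigma \supset \tau$. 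When $\deg\D = \emptyset$ the argument is cleaner: Corollary~\ref{sec:cor-smooth} forces every $\D_y$ to be either empty or a lattice translate of $\tau$, and emptiness at one point is the single permitted exception $z'$.

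The main obstacle I anticipate is the precise identification of the unique exceptional point $z'$ and the verification that there is really \emph{at most one} of them rather than two. The affine corollaries give ``all but two are lattice translates,'' which naively would allow two bad points; the improvement to one bad point must come from the global structure — namely that $z$ is itself one of the distinguished points of $\D$ (it carries the maximal non-lattice polyhedron $P$), so among the ``two'' non-lattice coefficients one is $\D_z = P$ and hence does not count as a failure for $y \neq z$. Making this counting argument airtight, and checking that in the degenerate sub-cases (e.g. $\tau$ not regular, or $\D$ not literally of the form~\eqref{eq:1} but reducible to it after a lattice automorphism à la Remark~\ref{rem:downgrade}) the same bound persists, is the delicate part. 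Everything else is a routine translation between the polyhedral-divisor language and the combinatorics of the slices, using Proposition~\ref{sec:lemma-smooth} and Lemma~\ref{sec:lemma-smooth-complete}.
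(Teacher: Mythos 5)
There is a genuine gap, and it occurs at the very first substantive step. You apply Corollary~\ref{sec:cor-facet-rays} or Corollary~\ref{sec:cor-smooth} to the p-divisor $\D$ with $\D_z=P$ and conclude that \emph{all} coefficients $\D_y$ are translates of $\tau=\tail(P)$, all but two of them lattice translates. Neither corollary applies here: Corollary~\ref{sec:cor-facet-rays} assumes the tail cone has maximal dimension, and the second half of Corollary~\ref{sec:cor-smooth} assumes $\sigma$ is maximal, whereas $\tau$ is non-maximal by hypothesis. The asserted conclusion is in fact false on its face, since the coefficient $\D_z=P$ is a full-dimensional polyhedron and hence not a translate of $\tau$ at all; and the other coefficients $\D_y$ need not be translates of $\tau$ either (smoothness with $\deg\D=\emptyset$ only forces each $\cone(\D_y)$ to be regular). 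Note also that your trichotomy on $\deg\D$ is off: since $P$ is full-dimensional and $\tau$ is not, the properness condition~\eqref{eq:properness} forces $\deg\D=\emptyset$ outright, so the sub-case ``$\D$ of the form~\eqref{eq:1} with $\deg\D$ meeting the boundary of $\tau$'' does not occur for this divisor.

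More fundamentally, the lattice translates of $\tau$ whose existence the proposition asserts are in general \emph{not} coefficients of the p-divisor carrying $P$; they must be produced elsewhere in the slices. This is exactly what the paper's proof does: it passes to the region $R$ of $\fan_z$ formed by all maximal polyhedra with tail $\tau$, looks at the neighbouring full-dimensional polyhedra $\Delta^\pm$ with maximal tails $\sigma^\pm\succ\tau$, and splits into cases according to whether $\sigma^\pm$ are marked (Lemma~\ref{sec:lemma-smooth-complete} settles the unmarked and ``non-lattice coefficient'' cases). The hard case, where both $\sigma^+$ and $\sigma^-$ are marked and their coefficients at $z$ are lattice translates, requires the degree rule together with a unimodularity computation on the matrices built from $\cone(w_0^\pm+\D^\pm_{y_0},w_\infty^\pm+\D^\pm_{y_\infty})$, driven by the inequality $\langle v^+_z,u\rangle>\langle v^-_z,u\rangle$ that records the full-dimensional region $R$ separating $\Delta^+$ from $\Delta^-$. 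None of this local-to-global mechanism appears in your sketch; the ``routine translation between p-divisors and slices'' you defer to is precisely where the content of the proof lies, and the ``two exceptions versus one'' bookkeeping you flag as delicate cannot even be started from the premise you set up, because that premise concerns the wrong p-divisor.
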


\begin{proof}
We denote the tail cone of $P$ by $\tau$. Consider the part $R$ of $\fan_{z}$ consisting of all maximal polyhedra with tail $\tau$. We are looking at the boundary facets of this part. There is
a facet having tail $\tau$, it corresponds to a primitive lattice element
$u \in \tau^\perp$, which is minimized on this facet. On the other side of the facet we have
a neighboring full-dimensional polyhedron $P'$ having a tail cone $\tau' \succ \tau$.
Replacing $P$ by $P'$ and iterating this procedure, we end up with a maximal polyhedron $P$,
a non-maximal tail cone $\tau = \tail P$, a region $R$ of $\fan_{z}$, and a facet of $R$
minimizing some $u \in \tau^\perp$ (which necessarily has tail cone $\tau$) such that
the neighboring polyhedron $\Delta^+$ has full-dimensional tail $\sigma^+$.
Now, we treat two cases separately:
\textbf{1)} $\dim \tau < n - 1$ and \textbf{2)} $\dim \tau = n - 1$.

In the first case, the common facet of $\Delta^+$ and $R$ has dimension $n - 1$, but tail
cone $\tau$ of dimension less than $n - 1$. This implies that the facet and, hence, $\Delta^+$
has at least $n - \dim \tau > 1$ vertices. In particular, it is not a lattice translate of
a cone and by Lemma~\ref{sec:lemma-smooth-complete} the tail cone $\sigma^+$ has to be marked.
Again by Lemma~\ref{sec:lemma-smooth-complete} for $y \neq z$ all but one of the
$\fan_y(\sigma^+)$ are lattice translate of $\sigma^+$. Hence, the faces of these
$\fan_y(\sigma^+)$ with tail cone $\tau$ are indeed lattice translates of $\tau$ and
the claim is proved.

In the second case, $-u$ is minimized on another facet of $R$. For the neighboring full-dimensional
polyhedron $\Delta^-$ we have $\tau \prec \sigma^-:=\tail \Delta^-$. Since $\tau$ is of
dimension $n -1$, the cone $\sigma^-$ must be full-dimensional. By construction
$\sigma^+ \cap \sigma^- = \tau$. Assume that $\sigma^+$ is not marked. Then all polyhedra
$\fan_y(\sigma^+) \in \fan_y$ are lattice translations of $\sigma^+$. As before, we infer
that the claim is fulfilled in this case. The same applies if  $\sigma^-$ is not marked.

Now assume that both $\sigma^+$ and $\sigma^-$ are marked. There are p-divisors
in $\D^+,\D^- \in \fan$ with $\tail \D^\pm  = \sigma^\pm$ and $\deg \D^\pm \neq \emptyset$.
If $\Delta^\pm=\D^\pm_z$ is not a lattice translate, then we know that all other polyhedra
$\D^\pm_y$ are lattice translates of $\sigma^\pm$ up to one exception. Hence, every $\D^\pm_y$
up to one exception contains a lattice translation of $\tau \prec \sigma^\pm$ and the claim
follows. Hence, we may assume that $\D^+_z$, $\D^-_z$ are just lattice translates
of the cone $\sigma^+$ and $\sigma^-$ respectively.

Remember that we have a maximal polyhedron $P \in \fan_z$ with non-maximal tail cone $\tau$.
Hence, there is some p-divisor $\D(P) \in \fan$ with $\D(P)_z=P$. By the properness
condition~(\ref{eq:properness}) we have $\deg \D(P)=\emptyset$ and by the degree rule
we have $\tau \cap \deg \D^\pm = \emptyset$. Now, by Corollary~\ref{sec:cor-facet-rays} we know
that all $\D^\pm_y$ are just translated cones $(v_y^\pm + \sigma^\pm)$. Moreover, up to
two exceptions $\D_{y_0}^\pm=(v^\pm_0+\sigma)$ and $\D_{y_\infty}^\pm = (v_\infty^\pm + \sigma)$
they are even lattice translates, i.e. $v^\pm_y \in N$.

Corollary~\ref{sec:cor-smooth} ensures that $\tau$ is a regular cone. Hence, the primitive
ray generators $e_1, \ldots e_{n-1}$ of $\tau$ form a part of a basis $e_1, \ldots e_{n}$
of $N$. Since $u \in \tau^\perp$ we have $\langle u, e_n \rangle=1$. Now, the elements $(e_i,0)$
together with $(0,1)$ form a basis of $N \times \ZZ$. We use this basis for an identification
$N \times \ZZ \cong \ZZ^{n+1}$. In particular, $\langle u, \cdot \rangle$ equals to the
$n$-th coordinate in this basis.

By Lemma~\ref{sec:lemma-cone-rays}, the primitive ray generators of
$\cone(w^\pm_0+\D_{y_0}^\pm, w^\pm_\infty +\D_{y_\infty}^\pm)$
(as in Remark~\ref{rem:downgrade}) are given by the columns of the following matrix. Due to the smoothness condition these matrices have to be unimodular. There first $n-1$ columns correspond to the rays of $\tau$ and the last two columns to
the vertex in $\D_{y_0}$ and $\D_{y_\infty}$, respectively.

Here, $\mu_0^\pm$, $\mu_\infty^\pm$ are minimal positive integers such that
$\mu^\pm_0 \cdot v^\pm_0$ and $\mu^\pm_\infty \cdot v^\pm_\infty$
are lattice elements.
By the slice rule, we have $\langle u, v^+_y \rangle \geq \langle u, v^-_y \rangle$
(else $(v^+_y + \sigma^+)$ and $(v^-_y + \sigma^-)$ would intersect in a non-face, since
$\tau = \sigma^+ \cap u^\perp = \sigma^-\cap u^\perp$ is a common facet). Moreover,
 $ \langle v^+_z, u \rangle  >  \langle v^-_z, u \rangle$
holds,  since $\Delta^+=(v_z^+ +\sigma^+)$ and $\Delta^-=(v_z^- +\sigma^-)$ are separated by the full-dimensional region~$R$. Note that the compared values are integers. Let us set
$\Sigma^\pm=\sum_y v^\pm_y$. By definition, we have $v^\pm = \Sigma^\pm-v_0^\pm-v^\pm_\infty$.
We obtain
$  \langle \Sigma^+, u \rangle  \geq  \langle \Sigma^-, u \rangle + 1.$

\[M^\pm=
\left(\begin{array}{ccccc}
  1&      & &* &*\\
   &\ddots& &\vdots &\vdots \\
   &      &1&* &* \\
  0&\cdots&0&\langle v^\pm_0 +w^\pm_0, u \rangle & \langle v^\pm_\infty +w^\pm_\infty, u \rangle\\
  0&\cdots&0&\mu^\pm_0&-\mu^\pm_\infty\\
\end{array}\right)
\]

We choose $w_0^+$ in a way such that $0 \leq \langle v^+_0 + w^+_0, u \rangle < 1$ holds and
set $w_\infty^+ = v^+ - w^+_0$,
$w_\infty^- =  w^+_\infty - \lfloor v^-_\infty-v^+_\infty \rfloor$ (componentwise rounding)
and $w_0^-=v^--w_\infty^-$. Hence, we obtain
$\langle v^-_\infty + w^-_\infty, u \rangle \leq  \langle v^+_\infty + w^+_\infty, u  \rangle$
and
\begin{align*}
  v^-_0+w^-_0 &= v^-_0 + v^--w_\infty^- = \Sigma^- - v^-_\infty - w_\infty^-\\
&= \Sigma^- - v^-_\infty - w^+_\infty + \lfloor v_\infty^--v^+_\infty \rfloor\\
&= \Sigma^- - v^-_\infty -  v^+ + w^+_0 + \lfloor v_\infty^--v^+_\infty \rfloor\\
&= \Sigma^- - v^-_\infty - \Sigma^+ + v^+_0+ v^+_\infty  + w^+_0 + \lfloor v^-_\infty-v^+_\infty \rfloor \\
&= w^+_0+ v_0^+ + (\Sigma^- - \Sigma^+) + \big(\lfloor v^-_\infty-v^+_\infty \rfloor - (v^-_\infty-v^+_\infty)\big).
\end{align*}
After pairing with $u$ we obtain
$\langle v_0^-+w_0^-, u \rangle \leq \langle w^+_0+v^+_0,u \rangle - 1 < 0$.
Hence, either $\langle v^+_0 +w^+_0, u \rangle,\; \langle v^+_\infty +w^+_\infty, u \rangle \geq 0$
or $\langle v^-_0 +w^-_0, u \rangle,\; \langle v^-_\infty +w^-_\infty, u \rangle \leq 0$.
In both cases we need to have either $\mu_0^\pm=1$ or $\mu^\pm_\infty=1$ in order to obtain $|\det M^\pm|= 1$.
All but one coefficient of $\D^+$ or $\D^-$, respectively, are lattice translates.
Since $\tau$ is a face of $\sigma^\pm$ we will always find a lattice translate of $\tau$
as well, and Proposition~\ref{sec:prop-non-maximal} is proved.
\end{proof}

\begin{proof}[Proof of Theorem~\ref{sec:thm-cplx-one}]
Consider a set $\fan$ of p-divisors giving rise to a covering of $X$ as above.
We construct another set of p-divisors $\fan'$ giving rise to an $A$-covering of $X$.

Let $\sigma$ be a marked maximal cone in $\tail \fan$. There is a $\D \in \fan$
with $\deg \D \neq \emptyset$ and $\tail \D = \sigma$. We simply add it to $\fan'$.
By Lemma~\ref{sec:lemma-smooth}, $X(\D)$ is an affine space. If $\sigma$ is maximal
but not marked, then by Lemma~\ref{sec:lemma-smooth-complete} the polyhedra $\fan_y(\sigma)$
are just lattice translates of $\sigma$. Now, we add the following two polyhedral divisors
to $\fan'$:
\[\D_0 = \emptyset \cdot 0 + \sum_{y \neq 0} \fan_y(\sigma) \cdot y \quad \text{and} \quad
\D_\infty =\emptyset \cdot \infty + \sum_{y \neq \infty} \fan_y(\sigma)\cdot y.\]
From Remark~\ref{rem:downgrade} we know that $X(\D_0)$ and $X(\D_\infty)$ are both
affine spaces.

By these considerations $\fan'_y$ covers all polyhedra from $\fan_y$ having maximal tail cones.
Moreover, the markings are the same as for $\fan$. It remains to care
for maximal polyhedra $P$ having non-maximal tail $\tau$. We consider such a polyhedron living in some slice $\fan_{z}$. By Proposition~\ref{sec:prop-non-maximal}, we have a lattice translate
$(v_y + \tau)$  in every slice except for $\fan_z$ and $\fan_{z'}$. Having this, we can add the p-divisor $\D(P) = \emptyset \cdot z' + P \cdot z + \sum_{y \neq z,z'} (v_y + \tau) \cdot y$
to $\fan'$. Thus for all maximal polyhedra with non-maximal tail
we obtain $\fan_y = \fan'_y$ for all $y \in \PP^1$.
From Remark~\ref{rem:downgrade} we know that $X(\D(P))$
are affine spaces. Hence, we obtain an $A$-covering of~$X$.
\end{proof}

\begin{remark}
By Remark~\ref{rem:non-a-covered}, for complexity three Theorem~\ref{sec:thm-cplx-one}
does not hold. For complexity two, Theorem~\ref{sec:thm-cplx-one} holds at least for surfaces and the threefolds $V_5$ and $V_{22}'$ carrying a $\KK^{\times}$-action.
\end{remark}


\section*{Acknowledgement}

The authors are grateful to Mikhail Zaidenberg for useful comments and remarks.
Special thanks are due to the referee for careful reading, constructive criticism
and important suggestions.


%

\begin{thebibliography}{}
%
\bibitem{ah06}
K.~Altmann and J.~Hausen. Polyhedral divisors and algebraic torus actions.
Math. Ann. 334 (2006), no.~3, 557--607.
%
\bibitem{ahs}
K.~Altmann, J.~Hausen, and H.~S\"u\ss.
Gluing affine torus actions via divisorial fans.
Transform. Groups 13 (2008), no.~2, 215--242.
%
\bibitem{tvars}
K.~Altmann, N.~Ilten, L.~Petersen, H.~S\"u\ss, and R.~Vollmert.
The Geometry of T-Varieties.
Contributions to Algebraic Geometry, IMPANGA Lecture Notes, Pragacz, Piotr (Editor),
EMS, 17--69 (2012).
%
\bibitem{ADHL}
I.V.~Arzhantsev, U.~Derenthal, J.~Hausen, and A.~Laface.
Cox rings. arXiv:1003.4229; see also authors' webpages.
%
\bibitem{AFKKZ}
I.V.~Arzhantsev, H.~Flenner, S.~Kaliman, F.~Kutzschebauch, and M.~Zaidenberg.
Flexible varieties and automorphism groups. Duke Math. J.
162 (2013), no.~4, 767--823.
%
\bibitem{AFKKZ-2}
I.V.~Arzhantsev, H.~Flenner, S.~Kaliman, F.~Kutzschebauch, and M.~Zaidenberg.
Infinite transitivity on affine varieties. In: Birational Geometry, Rational
Curves, and Arithmetic, F.~Bogomolov, B.~Hassett, and
Yu.~Tschinkel (Editors), Springer Science-Business Media New York, 2013, 1--13.
%
\bibitem{AKZ}
I.V.~Arzhantsev, K.~Kuyumzhiyan, and M.~Zaidenberg.
Flag varieties, toric varieties, and suspensions:
three instances of infinite transitivity.
Sbornik: Math. 203 (2012), no.~7, 923--949.
%
\bibitem{BaPo}
V.V.~Batyrev and O.N.~Popov.
The Cox ring of a Del Pezzo surface.
In: Arithmetic of higher-dimensional algebraic varieties, Progress Math.~226,
85--103 (2004)
%
\bibitem{BH1}
F.~Berchtold and J.~Hausen.
Homogeneous coordinates for algebraic varieties.
J. Algebra  266 (2003), no.~2, 636--670.
%
\bibitem{BH2}
F.~Berchtold and J.~Hausen.
Cox rings and combinatorics.
Trans. Amer. Math. Soc.  359  (2007), no.~3, 1205--1252.
%
\bibitem{BKK}
F.~Bogomolov, I.~Karzhemanov, and K.~Kuyumzhiyan.
Unirationality and existence of infinitely transitive models.
In: Birational Geometry, Rational
Curves, and Arithmetic, F.~Bogomolov, B.~Hassett, and
Yu.~Tschinkel (Editors), Springer Science-Business Media New York, 2013,
77--92.
%
\bibitem{BLV}
M.~Brion, D.~Luna, and Th.~Vust.
Expaces homog\'enes sph\'eriques. Invent. Math. 84 (1986), 617--632.
%
\bibitem{CTS1}
J.-L. Colliot-Th\'el\`ene and J.-J. Sansuc. Torseurs sous des groupes de type multiplicatif;
applications \`a l'\'etude des points rationnels de certaines vari\'et\'es alg\'ebriques.
C. R. Acad. Sci. Paris S\'er. A-B, 282 (1976), 1113--1116.
%
\bibitem{CTS2}
J.-L. Colliot-Th\'el\`ene and J.-J. Sansuc.
La descente sur les vari\'et\'es rationnelles. II. Duke Math. J. 54 (1987), no.~2, 375--492.
%
\bibitem{Cox}
D.A.~Cox. The homogeneous coordinate ring of a toric variety.
J. Alg. Geometry 4 (1995), no.~1, 17--50.
%
\bibitem{Fr}
G.~Freudenburg. Algebraic theory of locally nilpotent derivations.
Encyclopaedia Math. Sci. 136, Springer, Berlin, 2006.
%
\bibitem{furu}
M.~Furushima. The structure of compactifications of $\mathbb{C}^3 $.
Proc. Japan Acad. Ser. A Math. Sci.68 (1992), no.~2, 33--36.
%
\bibitem{Ha}
J.~Hausen.
Cox rings and combinatorics II.
Moscow Math. J. 8 (2008), no.~4, 711--757.
%
\bibitem{HK}
Y.~Hu and S.~Keel.
Mori dream spaces and GIT.
Michigan Math. J. 48 (2000), 331--348.
%
\bibitem{iv}
N.~Ilten and R.~Vollmert.
Deformations of Rational T-Varieties.
J. Alg. Geometry 21 (2012), no.~3, 473--493.
%
\bibitem{is}
N.~Ilten and H.~S\"u\ss.
Polarized complexity-1 T-varieties.
Michigan Math. J. 60 (2011), no.~3, 561--578.
%
\bibitem{isk78}
V.A.~Iskovskikh. Fano threefolds. II.
Math. USSR Izv.~12 (1978), no.~3, 469--506.
%
\bibitem{KZ}
S.~Kaliman and M.~Zaidenberg.
Affine modifications and affine
hypersurfaces with a very transitive automorphism group.
Transform. Groups~4 (1999), 53--95.
%
\bibitem{KPZ}
T.~Kishimoto, Yu.~Prokhorov, and M.~Zaidenberg.
Group actions on affine cones.
CRM Proceedings and Lecture Notes, vol.~54, Amer. Math. Soc.,
123--164 (2011).
%
\bibitem{KPZ2}
T.~Kishimoto, Yu.~Prokhorov, and M.~Zaidenberg.
Affine cones over Fano threefolds and additive group actions.
Osaka J. of Math., to appear, arXiv:1106.1312.
%
\bibitem{Lie}
A.~Liendo. $\GG_a$-actions of fiber type on affine $\mathbb{T}$-varieties.
J. Algebra 324 (2010), no.~12, 3653--3665.
%
\bibitem{mm81}
S.~Mori and S.~Mukai.
Classification of Fano 3-folds with $B_2 \geq 2$.
Manuscr. Math. 36 (1981), no.~2, 147--162.
%
\bibitem{Pe}
A.Yu.~Perepechko. Flexibility of affine cones over del Pezzo surfaces of degree 4 and 5.
Func. Anal. Appl. to appear; see also arXiv:1108.5841, 6~p.
%
\bibitem{PV}
V.L.~Popov and E.B.~Vinberg. Invariant Theory. In: Algebraic
geometry IV,  A.N.~Parshin, I.R.~Shafarevich (Editors), Berlin,
Heidelberg, New York: Springer-Verlag, 1994.
%
\bibitem{SeSk}
V.V.~Serganova and A.N~Skorobogatov.
Del Pezzo surfaces and representation theory.
Algebra Number Theory~1 (2007), no.~4, 393--419.
%
\bibitem{SeSk2}
V.V.~Serganova and A.N.~Skorobogatov.
On the equations for universal torsors over del Pezzo surfaces.
J. Inst. Math. Jussieu 9 (2010), no.~1, 203-223.
%
\bibitem{Sk}
A.N. Skorobogatov. Torsors and rational points.
Cambridge Tracts in Mathematics 144,
Cambridge University Press, Cambridge, 2001.
%
\bibitem{h08}
H.~S\"u{\ss}. Canonical divisors on T-varieties.
arXiv:0811.0626v1~[math.AG], 2008.
\bibitem{t3folds}
H.~S\"u{\ss}. Fano threefolds with 2-torus action -- a picture book.
arXiv:1308.2379~[math.AG], 2008.
%
\end{thebibliography}
\end{document}